\newtheorem{thm}{Theorem}[section]
\newtheorem{lem}[thm]{Lemma}
\newtheorem{remark}[thm]{Remark}
\newcommand{\R}{\mathbb{R}}
\newcommand{\wt}{\widetilde}
\begin{document}
\title[Asymptotic Behavior of Solutions for Competitive Models]{Asymptotic Behavior of Solutions for Competitive Models with
Free Boundaries$^\dag$}
 \thanks{$\dag$ This work is supported in part by NSFC (No. 11271285).}
\author[J. Yang]{Jian Yang$^\ddag$}
\thanks{$\ddag$ Department of Mathematics, Tongji University, Shanghai 200092, China.}
\thanks{{\bf Emails:} {\sf yangjian86419@126.com} (J. Yang). }

\begin{abstract}
In this paper, we study a competitive model involving two species. When the competition is strong enough, the two species are separated by a free boundary. If the initial data has a positive bound at infinity. We prove that the solution will converge, as $t\rightarrow \infty$, to the traveling wave solution and the free boundary will move to infinity with a constant speed.
\end{abstract}

\subjclass[2010]{35B40, 35K57, 35R35}
\keywords{Asymptotic behavior, competitive model, free boundary problem.}
\maketitle

\section{Introduction}
In this paper, we study the asymptotic behavior of solutions for the following competitive model
\begin{equation}\label{p}
\left\{
\begin{array}{l}
P_{t}=d_1P_{xx}+f(P),\hskip26mm x<s(t),\ t>0,\\
Q_{t}=d_2Q_{xx}+g(Q),\hskip25mm x>s(t),\ t>0,\\
P(x,t)=Q(x,t)=0, \hskip23mm  x=s(t),\ t>0,\\
s'(t)=-\mu_1 P_{x}(x,t)-\mu_2 Q_{x}(x,t), \ \ \ x=s(t),\ t>0,\\
s(0)=s_0,\ s_0\in(-\infty,\infty),\\
P(x,0)=P_0(x)(x<s_0),\ Q(x,0)=Q_0(x)(x>s_0)
\end{array}
\right.
\end{equation}
on unbounded domain, where $x=s(t)$ is the free boundary to be determined together with $P$ and $Q$, $f,g\in C^1([0,\infty))$
satisfying $f(0)=g(0)=0$.

We mainly consider monostable and bistable type of nonlinearities. More precisely, we call $f$ a monostable type of nonlinearity ($f$ is of (f$_M$) type, for short),
if $f\in C^1([0,\infty))$ and
\begin{equation*}
f(0)=0<f'(0), \quad f(1)=0> f'(1),\quad (1-s)f(s) >0 \ \mbox{for } s>0, s\not= 1;
\end{equation*}
we say that $f$ is a bistable type of nonlinearity ($f$ is of (f$_B$) type, for short),
if
\begin{equation*}
\hskip 10mm
\left\{
 \begin{array}{l}
 f\in C^1([0,\infty)), \  f(0)=0> f'(0), \ f(1)=0 > f'(1),\ \int_0^1 f(s) ds >0,\\
  \ f(\cdot) <0 \mbox{ in } (0,\theta)\cup (1,\infty),\medskip  f(\cdot)>0 \mbox{ in } (\theta,1) \mbox{ for some } \theta \in (0,1),
  \\ \mbox{ for } F(u):=-2\int_0^uf(s)ds,\ F(\bar{\theta})=0 \mbox{ for some } \bar{\theta}\in(\theta,1).
\end{array}
\right.
\end{equation*}

In population ecology, the appearance of regional partition of multi-species through strong competition
is one interesting phenomena. In \cite{MYY85,MYY86,MYY87}, Mimura, Yamada and Yotsutani used the following reaction-diffusion equations
\begin{equation}\label{MMY}
\left\{
\begin{array}{l}
P_{t}=d_1P_{xx}+f(P),\hskip26mm 0<x<s(t),\ t>0,\\
Q_{t}=d_2Q_{xx}+g(Q),\hskip25mm s(t)<x<1,\ t>0,\\
P(x,t)=Q(x,t)=0, \hskip23mm  x=s(t),\ t>0,\\
s'(t)=-\mu_1 P_{x}(x,t)-\mu_2 Q_{x}(x,t), \ \ \ x=s(t),\ t>0,\\
P(0,t)=m_1,\ Q(1,t)=m_2,\hskip13mm t>0,\\
s(0)=s_0(0<s_0<1),\\
P(x,0)=P_{0}(x)(0<x<s_0),\ Q(x,0)=Q_{0}(x)(s_0<x<1)
\end{array}
\right.
\end{equation}
to describe regional partition of two species, which are struggling on a boundary to obtain their own habitats. When $m_1,m_2>0$ and $f,g$ are monostable nonlinearities. They prove the global existence, uniqueness, regularity and asymptotic behavior of solutions for problem \eqref{MMY} in \cite{MYY85}. When $m_1,m_2>0$ and $f,g$ are bistable nonlinearities. The author establish the stability for stationary solutions for the free boundary problem and their argument is based on the notion of $\omega-$limit set and the comparison principle in \cite{MYY86}. When $m_1,m_2=0$, i.e. homogeneous Dirichlet boundary conditions are imposed, there is possibility that the free boundary may hit the fixed ends $x=0,1$ in a finite time. This is a very interesting phenomenon. Therefore, in \cite{MYY87}, the author prove that if the free boundary hits the fixed boundary at a finite time $t=T^*$, the free boundary stays there after $T^*$.

Problem \eqref{MMY} is defined on finite interval $[0,1]$. A natural question is what will happen if the two species competitive model is defined on $(-\infty,\infty)$? If the two species competitive model is defined on the entire space, the free boundary $s(t)$ could move to infinity in different ways and the problem may become more complicated. This is why we are interested in studying problem \eqref{p}. Possibly, the solution will develop into the traveling wave eventually if problem \eqref{MMY} is defined on unbounded domain. So, it is necessary to consider the traveling wave solution before we investigate the asymptotic behavior of solutions for problem \eqref{p}. To study the traveling wave solution of problem \eqref{p} is equivalent to study the solution of the following ordinary differential equations
\begin{equation}\label{erlianbo}
\left\{
\begin{array}{l}
d_1\phi''+c\phi'+f(\phi)=0,\quad x\in(-\infty,0],\\
d_2\psi''+c\psi'+g(\psi)=0,\quad x\in[0,\infty),\\
\phi(0)=\psi(0)=0,\\
\phi(-\infty)=\psi(+\infty)=1,\\
c=-\mu_1 \phi'(0)-\mu_2 \psi'(0).
\end{array}
\right.
\end{equation}
Recently, in \cite{CC}, Chang and Chen prove the existence of a traveling wave solution of \eqref{erlianbo} for logistic type nonlinearities. In \cite{YL}, we extend the results in \cite{CC} to more general nonlinearities. We prove that
if $\alpha>0$ is a given constant, then for any $c\in (c^*_g, \hat{c}_f)$,
where $c^*_g <0$ is the maximal speed when $g$ is of {\rm (f$_M$)} type, or the unique speed when $g$ is of {\rm (f$_B$)} type and $\hat{c}_f >0$ depends only on $\alpha$ and $f$, there exists a unique $\beta(c)>0$
such that \eqref{erlianbo} has a unique solution $(\phi, \psi, c)$.
Moreover, $\beta(c)$ is continuous and strictly decreasing in $c\in (c^*_g, \hat{c}_f)$ and
\begin{equation*}
c\to \hat{c}_f \ \Leftrightarrow \ \beta\to 0,\qquad
c\to c^*_g \ \Leftrightarrow \ \beta\to \infty,\qquad c>0\ \Leftrightarrow \ \beta< \tilde{\beta},
\end{equation*}
where $\tilde{\beta} := \alpha (\int_0^1f(s)ds / \int_0^1g(s)ds)^{1/2}$. If $\beta>0$ is a given constant, we can obtain similar results in the same method.

The main purpose of this paper is to prove that for any initial data $P_0\mbox{ and }Q_0$ in problem \eqref{p}, as long as they have positive lower bound at infinity, the solution of problem \eqref{p} will converge to a traveling wave as $t\to \infty$. Our main result is as follows:
\begin{thm}\label{thm:asymptoticbehavior} Assume $f,g$ are of (f$_M$) type and the initial data $P_0,Q_0$ satisfy
\begin{equation}\label{initialdataforfm}
0<\liminf_{x\rightarrow-\infty} P_0(x)<\limsup_{x\rightarrow-\infty} P_0(x)<\infty,\
0<\liminf_{x\rightarrow\infty} Q_0(x)<\limsup_{x\rightarrow\infty} Q_0(x)<\infty,
\end{equation}
then for some constant $x^*$, the solution of problem \eqref{p} satisfies
\begin{equation}\label{behaviorofuv}\left.
\begin{array}{l}
\displaystyle\lim_{t\rightarrow\infty}\sup_{(-\infty,s(t)]}|P(x,t)-\phi(x-ct-x^*)|=0, \
\displaystyle\lim_{t\rightarrow\infty}\sup_{[s(t),\infty)}|Q(x,t)-\psi(x-ct-x^*)|=0,
\end{array}\right.
\end{equation}
where $(\phi,\psi,c)$ is defined in \eqref{erlianbo}. Moreover,
\begin{equation}\label{asymptoticbehaviorofws}\left.
\begin{array}{c}
\displaystyle\lim_{t\rightarrow\infty}(s(t)-ct-x^*)=0, \ \lim_{t\rightarrow\infty}s'(t)= c.
\end{array}\right.
\end{equation}
\end{thm}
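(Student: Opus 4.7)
My plan is to prove convergence by a squeeze/compactness argument in the moving frame $y=x-ct$, organised in three steps: (a) preliminary global estimates that force $P,Q$ to approach $1$ away from the free boundary and $s(t)-ct$ to remain bounded, (b) construction of shifted traveling waves with small time-decaying corrections that sandwich the solution from above and below, and (c) identification of the limiting shift using the uniqueness statement for traveling waves from \cite{YL}.

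For step (a), the monostable structure of $f,g$, together with the positive $\liminf$ hypothesis \eqref{initialdataforfm} and comparison with the spatially homogeneous ODEs $\dot p=f(p)$, $\dot q=g(q)$, gives uniform $L^\infty$ bounds on $P,Q$ and forces $P(x,t)\to 1$ as $x\to-\infty$ and $Q(x,t)\to 1$ as $x\to+\infty$, uniformly in $t\geq T$ for some large $T$. Next, by comparing the solution with traveling waves of speeds slightly above and below $c$ (available from the existence part of \cite{YL}), I obtain $c-\ve\leq\liminf_{t\to\infty}s(t)/t\leq\limsup_{t\to\infty}s(t)/t\leq c+\ve$ for every $\ve>0$, and then, through more refined comparison, $|s(t)-ct|\leq C$. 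For step (b), starting from the traveling wave $(\phi,\psi,c)$ I construct a family of super-solutions of the form $(\phi(y-X-\eta(t))+\ve_1(t),\,\psi(y-X-\eta(t))+\ve_2(t))$ with free boundary $\bar s(t)=ct+X+\eta(t)$, where $X$ is a large shift making the super-solution dominate the solution at $t=T$, and $\eta,\ve_1,\ve_2$ are smooth time-decaying corrections chosen so that (i) $\bar P,\bar Q$ satisfy the parabolic super-solution inequalities on their respective domains, and (ii) the Stefan inequality $\bar s'(t)\geq-\mu_1\bar P_x(\bar s(t),t)-\mu_2\bar Q_x(\bar s(t),t)$ holds. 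A mirrored construction yields a sub-solution. By the comparison principle for the coupled free-boundary system (in the spirit of \cite{MYY85}), this sandwiches $s(t)-ct$ between two constants; iterating the construction with progressively smaller $X$ yields precompactness of $\{s(\cdot+t_n)-c(\cdot+t_n)\}_n$.

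For step (c), fix any sequence $t_n\to\infty$ and consider the translates $(P_n,Q_n,s_n)(x,t):=(P(x+ct_n,t+t_n),\,Q(x+ct_n,t+t_n),\,s(t+t_n)-ct_n)$. Parabolic Schauder estimates up to the free boundary, together with the bounds from (a) and (b), yield subsequential $C^{2+\alpha,1+\alpha/2}_{\mathrm{loc}}$ convergence to an entire solution $(P_\infty,Q_\infty,s_\infty)$ of the system in the frame moving at speed $c$, with $s_\infty(t)=ct+\xi_\infty$ for some constant $\xi_\infty$ and with limits $1$ at $\pm\infty$. The uniqueness part of \cite{YL} then forces $(P_\infty,Q_\infty)(\cdot,0)\equiv(\phi(\cdot-\xi_\infty),\psi(\cdot-\xi_\infty))$. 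Independence of $\xi_\infty$ from the subsequence follows because the Stefan condition selects a unique shift among traveling waves of speed $c$; a standard connectedness argument on the continuous map $t\mapsto s(t)-ct$ then gives $s(t)-ct\to x^*$. Finally, \eqref{behaviorofuv} is the uniform translate of this convergence, and \eqref{asymptoticbehaviorofws} follows from the Stefan condition together with convergence of $P_x,Q_x$ at the free boundary.

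The main obstacle is step (b): designing the corrections $\eta,\ve_1,\ve_2$ so that both the parabolic super-solution inequalities and the Stefan inequality hold simultaneously is delicate, because the free-boundary condition couples the normal derivatives of $P$ and $Q$ from opposite sides, and the comparison principle for the coupled two-species system requires the super-solution's free boundary to stay on the correct side of the sub-solution's throughout the evolution, with the $P$ and $Q$ components respecting the matching domain inclusions. I expect $\eta,\ve_i$ to be of exponential form with rate tuned to $\phi'(0),\psi'(0)$, so that the extra terms produced by differentiating the shifted profile absorb the errors introduced by time dependence. A secondary difficulty is verifying that the entire solution extracted in (c) is genuinely a traveling wave of speed $c$, which amounts to a Liouville-type classification for the coupled system and should follow from the uniqueness in \cite{YL} once a correct normalization at $t=0$ is imposed.
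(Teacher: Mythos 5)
Your steps (a) and (b) track the paper closely: Lemma~3.1 of the paper establishes exactly the uniform convergence to $1$ away from the interface, and Lemma~3.2 constructs sub/super-solutions of the form $\phi(z-\xi(t)+\alpha(t))-p(t)$, $\psi(z+\eta(t)-\beta(t))+q(t)$ with $p,q$ exponentially decaying, yielding the sandwich $\phi(z-\rho_1)-p_0e^{-\vartheta t}\le u\le\phi(z-\rho_2)+p_0e^{-\vartheta t}$ (and similarly for $v$) together with boundedness of $s(t)-ct$. The coupling difficulty you flag in step (b) is real and is handled in the paper by a careful choice of $\alpha(t),\beta(t),\lambda_0$ and a condition on $\lambda$ that makes both the parabolic inequalities and the Stefan inequality hold.

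The genuine gap is in step (c). You assert that the entire subsequential limit $(P_\infty,Q_\infty,s_\infty)$ automatically has $s_\infty(t)=ct+\xi_\infty$ and then invoke the uniqueness theorem of \cite{YL} as a Liouville classification. Neither of these is available as stated. The limit obtained from the parabolic estimates is an entire-in-time solution of the moving-frame system with a nonconstant free boundary $G(t)$ a priori; there is no reason for $G$ to be affine. The uniqueness result in \cite{YL} is uniqueness of the profile $(\phi,\psi)$ for a \emph{given} constant speed $c$; it is not a classification of entire solutions of the time-dependent free boundary problem. The paper has to work hard for exactly this point: it defines $R_*=\max\{R_u,R_v\}$ as the sharpest shift for which $\tilde U\le\phi(\cdot-R_*)$, $\tilde V\ge\psi(\cdot-R_*)$, then proves $R_*=\sup_t G(t)$ by a contradiction argument that slides auxiliary problems on half-lines, invokes the strong maximum principle and elliptic comparison (Lemma~3.5), and finally uses the Hopf boundary lemma at a touching time together with the Stefan condition to force $G\equiv R_*$ along a further translate (Lemma~3.6). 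This is the Liouville step and it does not come for free.

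The second gap is the passage from subsequential convergence to convergence along the full time axis. Your ``standard connectedness argument on $t\mapsto s(t)-ct$'' does not by itself rule out oscillation between two distinct shifts, because you have not shown the set of subsequential limits is discrete. The paper instead proves a quantitative trapping lemma (Lemma~3.3, closeaftert): if at some time $T$ the solution is within $\ve$ of a fixed translate $(\phi(\cdot-\rho_0),\psi(\cdot-\rho_0))$, it remains within $\omega(\ve)$ of the same translate for all $t\ge T$, with $\omega(\ve)\to0$. This is obtained by running the sub/super-solution machinery of Lemma~3.2 with $p_0,q_0,|\rho_i-\rho_0|=O(\ve)$, and it is what converts convergence along the special sequence $T_k$ into convergence as $t\to\infty$. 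Without such a stability statement, the conclusion of your step (c) is not justified.

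Finally, note a small technical point: the decay rates $\varrho,\vartheta$ in the paper's corrections are tuned to $f'(1),g'(1)$ (the linearization near the stable state), not to $\phi'(0),\psi'(0)$ as you suggest, although $\psi'(0)$ does enter through the auxiliary extension of $\psi$ to $(-\infty,0]$ and the constraint on $\lambda$ needed to verify the Stefan inequality.
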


\begin{remark}\label{rem:rem1}
Similar results in Theorem \ref{thm:asymptoticbehavior} also holds when $f$ and/or $g$ are of (f$_B$) type; namely, (\ref{behaviorofuv}) and (\ref{asymptoticbehaviorofws}) also hold if the initial data $P,Q$ satisfy one of the following conditions:
\begin{equation*}
\begin{array}{l}
(i)\ \ \ \displaystyle 0<\liminf_{x\rightarrow-\infty} P_0(x)<\limsup_{x\rightarrow-\infty} P_0(x)<\infty,\
\theta<\liminf_{x\rightarrow\infty} Q_0(x)<\limsup_{x\rightarrow\infty} Q_0(x)<\infty, \\
\mbox{ when } f \mbox{ is of } (f_M) \mbox{ type} \mbox{ and } g \mbox{ is of } (f_B) \mbox{ type };\\
(ii)\ \ \displaystyle \theta<\liminf_{x\rightarrow-\infty} P_0(x)<\limsup_{x\rightarrow-\infty} P_0(x)<\infty,\
0<\liminf_{x\rightarrow\infty} Q_0(x)<\limsup_{x\rightarrow\infty} Q_0(x)<\infty, \\
\mbox{ when } f \mbox{ is of } (f_B) \mbox{ type} \mbox{ and } g \mbox{ is of } (f_M) \mbox{ type };\\
(iii)\ \displaystyle \theta<\liminf_{x\rightarrow-\infty} P_0(x)<\limsup_{x\rightarrow-\infty} P_0(x)<\infty,\
\theta<\liminf_{x\rightarrow\infty} Q_0(x)<\limsup_{x\rightarrow\infty} Q_0(x)<\infty,\\
\mbox{ when } f \mbox{ is of } (f_B) \mbox{ type} \mbox{ and } g \mbox{ is of } (f_B) \mbox{ type }.
\end{array}
\end{equation*}
\end{remark}

\begin{remark}\label{ly}
If the diffusion constants $d_1=d_2=1$, we can construct Lyapunov functional as in \cite{FM} to prove the asymptotic behavior of solutions for problem (\ref{p}). However, through out our paper, we assume that the diffusion constants $d_1\neq d_2$. Thus, the method used in \cite{FM} does not work in this case. It is essentially different from the case $d_1=d_2=1$. Therefore, in our paper, we use a different way to prove the asymptotic behavior of solutions.
\end{remark}

The contents of this paper will be organized as follows: In section \ref{sec:pre}, we give some basic results on global existence of smooth solution for problem \eqref{p} and comparison principle. In section \ref{sec:asymptoticbehavior}, we prove Theorem \ref{thm:asymptoticbehavior}.


\section{Preliminary results}\label{sec:pre}
\subsection{The global existence of solutions}\label{subsec:locglobal}
\begin{thm}\label{thm:existence1}
If the initial data $P_{0},Q_{0}$ satisfy
\begin{equation}\label{initialdata}
P_0\in C^2((-\infty,s_0]), \
Q_0\in C^2([s_0,\infty)),
\end{equation}
then problem \eqref{p} has a unique solution
$$(P,Q,s)\in C^{1+\alpha,(1+\alpha)/2}(\bar{\Omega}_1)\times C^{1+\alpha,(1+\alpha)/2}(\bar{\Omega}_2)\times C^{1+\alpha/2}([0,\infty)).$$
Moreover,
\begin{equation*}
\|P\|_{C^{1+\alpha,(1+\alpha)/2}(\bar{\Omega}_1)}+\|Q\|_{C^{1+\alpha,(1+\alpha)/2}(\bar{\Omega}_2)}
+\|s\|_{C^{1+\alpha/2}([0,\infty))}\leq C
\end{equation*}
where $\bar{\Omega}_1=\{(x,t):x\in(-\infty,s(t)],t>0\},\bar{\Omega}_2=\{(x,t):x\in [s(t),\infty),t>0\},0<\alpha<1,C$ depends only on $\|P_0\|_{C^{1+\alpha}((-\infty,s_0])} \mbox{ and } \|Q_0\|_{C^{1+\alpha}([s_0,\infty))}$.
\end{thm}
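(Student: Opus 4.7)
The plan is to follow the classical template for Stefan-type free boundary problems: straighten the free boundary, establish local existence by a contraction mapping argument in a Hölder space for $s(t)$, and then extend the solution globally by means of a priori bounds. To straighten the boundary, I introduce two changes of variable $y=x-s(t)$ for the left problem and $z=x-s(t)$ for the right problem, converting $P,Q$ into functions $p(y,t),q(z,t)$ defined on the fixed half-lines $y\leq 0$ and $z\geq 0$ that satisfy
\begin{equation*}
p_t=d_1 p_{yy}+s'(t)p_y+f(p),\qquad q_t=d_2 q_{zz}+s'(t)q_z+g(q),
\end{equation*}
with Dirichlet conditions $p(0,t)=q(0,t)=0$, initial data $p(y,0)=P_0(y+s_0),\,q(z,0)=Q_0(z+s_0)$, and the Stefan-type relation $s'(t)=-\mu_1 p_y(0,t)-\mu_2 q_z(0,t)$.

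For local existence on a short interval $[0,T_0]$, I fix a constant $K$ (depending on the initial data) and work in the closed set $\mathcal{S}_{T_0}:=\{s\in C^{1+\alpha/2}([0,T_0]):s(0)=s_0,\;\|s'\|_{C^{\alpha/2}([0,T_0])}\leq K\}$. Given $s\in\mathcal S_{T_0}$, the two PDEs above become quasilinear parabolic equations on the fixed half-lines with the coefficient $s'(t)$ treated as known; standard Schauder theory (Ladyzhenskaya--Solonnikov--Ural'tseva), applied in weighted Hölder spaces on the half-line or after cutoff and exhaustion on bounded domains, produces unique solutions $p,q\in C^{2+\alpha,1+\alpha/2}$ with controlled boundary traces $p_y(0,\cdot),q_z(0,\cdot)\in C^{\alpha/2}([0,T_0])$. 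Defining $(\Phi s)(t):=s_0+\int_0^t[-\mu_1 p_y(0,\tau)-\mu_2 q_z(0,\tau)]d\tau$ gives a map $\Phi:\mathcal S_{T_0}\to C^{1+\alpha/2}([0,T_0])$, and standard estimates on the dependence of $p,q$ on the coefficient $s'$ show that $\Phi$ maps $\mathcal S_{T_0}$ into itself and is a contraction once $T_0$ is small enough, producing a unique local solution. Uniqueness for the full problem follows either from the contraction, or from a comparison argument applied to the difference of two solutions.

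To extend globally, I establish a priori bounds that do not blow up in finite time. The Lipschitz property $|f(P)|\leq L_f|P|,\,|g(Q)|\leq L_g|Q|$ (valid on any bounded range because $f,g\in C^1$ with $f(0)=g(0)=0$) combined with comparison against spatially constant super-solutions yields an $L^\infty$ bound on $P,Q$ on each interval $[0,T]$; when $f,g$ are of (f$_M$) or (f$_B$) type, this bound is uniform in $t$ by comparison with the stable state $1$. Interior parabolic regularity then gives Hölder bounds away from the boundary, while Hopf-type boundary estimates for the straightened problems control $p_y(0,t),q_z(0,t)$, and hence $|s'(t)|$, in terms of the $L^\infty$ bounds. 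These estimates prevent the transformation $y=x-s(t)$ from degenerating and allow the local solution to be extended step by step over the whole half-line $[0,\infty)$, yielding the stated global Hölder estimate. The main obstacle is handling the \emph{unboundedness} of the spatial domain, where off-the-shelf Schauder theory is usually phrased on bounded domains; I would address this by a standard exhaustion argument, solving the straightened problems on $[-n,0]$ and $[0,n]$ with, e.g., Neumann data at the far ends, obtaining Schauder estimates uniform in $n$ from the uniform $L^\infty$ and boundary estimates above, and passing to the limit via Arzelà--Ascoli.
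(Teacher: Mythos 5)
Your plan is sound and reaches the same conclusion, but it is organized differently from the paper's Appendix proof. The paper also straightens the free boundary via $y=x-s(t)$, but then sets up the contraction map on the \emph{triple} $(u,v,s)$ in a product space $D=\widetilde D_{1T}\times\widetilde D_{2T}\times\widetilde D_{3T}$: given $(u,v,s)$, it solves the \emph{linear} parabolic problems with the right-hand sides $f(u)$, $g(v)$ frozen and $s'$ a given coefficient, obtains $(\overline u,\overline v)$ by $L^p$ theory plus Sobolev embedding (giving $C^{1+\alpha,(1+\alpha)/2}$, consistent with only $C^2$ initial data), and defines $\overline s$ by integrating the Stefan condition. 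You instead iterate on $s$ alone and, at each step, solve the full semilinear half-line problems for $p,q$; this is the classical Stefan-problem organization and is conceptually cleaner, but it requires a preliminary well-posedness and Lipschitz-dependence result for the semilinear half-line problem (itself a fixed-point argument), whereas the paper's joint iteration needs only linear theory. Two further differences worth noting: (i) your Schauder route nominally gives $p,q\in C^{2+\alpha,1+\alpha/2}$, which would need $C^{2+\alpha}$ initial data with compatibility; since the theorem assumes only $C^2$, one should, as the paper does, aim for $C^{1+\alpha,(1+\alpha)/2}$ via $L^p$ theory or interior-boundary estimates, which still controls the boundary traces $p_y(0,\cdot),q_z(0,\cdot)$ in $C^{\alpha/2}$. (ii) For the global extension, the paper derives the uniform bound on $|s'|$ in Lemma 2.2 by comparison with explicit stationary solutions, not by a Hopf boundary estimate, and then applies bootstrap plus Schauder; your Hopf/comparison route accomplishes the same thing. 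Finally, you explicitly flag and handle the unboundedness of the spatial domain via exhaustion on $[-n,0]$, $[0,n]$; the paper applies $L^p$/Schauder on the half-lines directly without comment, so making this step explicit is a genuine improvement in rigor.
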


The proof of Theorem \ref{thm:existence1} will be given in the Appendix.

\begin{lem}\label{lem:stbounded}
Assume $(P,Q,s)$ be the solution of the free boundary problem \eqref{p} defined for $t\in(0,T)$ for some $T\in (0,\infty]$. Then there exists a positive constant $H$ independent of $T$ such that
\begin{equation*}
|s'(t)|\leq H  \mbox{ for }  0<t<T.
\end{equation*}
\end{lem}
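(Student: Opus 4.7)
The plan is to exploit the Stefan condition $s'(t) = -\mu_1 P_x(s(t),t) - \mu_2 Q_x(s(t),t)$ to reduce the lemma to uniform bounds on $|P_x(s(t),t)|$ and $|Q_x(s(t),t)|$. As a first step I would obtain uniform-in-$t$ estimates $0 \le P \le K_1$ and $0 \le Q \le K_2$ by the maximum principle, using $P(s(t),t) = 0$ together with the fact that, under $(f_M)$ or $(f_B)$, both $f$ and $g$ are nonpositive outside a bounded set; the constants $K_1, K_2$ depend only on $\|P_0\|_\infty, \|Q_0\|_\infty$ and the roots of $f,g$.

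To bound $|P_x(s(t),t)|$ I would construct a local upper barrier in the thin strip $\Omega_M := \{(x,t) : s(t) - M^{-1} < x < s(t),\ 0 < t < T\}$, namely
\[
w(x,t) := K_1\bigl[2M(s(t)-x) - M^2(s(t)-x)^2\bigr],
\]
so that $w_t - d_1 w_{xx} = -w_x(x,t)\,s'(t) + 2 d_1 K_1 M^2$. I would verify three comparison ingredients: (a) the lateral agreement $w(s(t),t) = 0 = P(s(t),t)$ and $w(s(t)-M^{-1},t) = K_1 \ge P$; (b) the initial comparison $w(x,0) \ge P_0(x)$ on $[s_0 - M^{-1}, s_0]$, which follows from $P_0(s_0)=0$ (hence $P_0(x) \le \|P_0'\|_\infty(s_0-x)$) combined with the elementary lower bound $w(x,0) \ge K_1 M(s_0-x)$, provided $M \ge \|P_0'\|_\infty / K_1$; (c) the differential inequality $w_t - d_1 w_{xx} \ge f(w)$ in $\Omega_M$, which holds once $M$ is so large that $2 d_1 K_1 M^2$ dominates $2 K_1 M\, |s'(t)| + \max_{[0,K_1]}|f|$. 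Comparison then forces $P \le w$ in $\Omega_M$, and since $P \ge 0$ vanishes together with $w$ at $x = s(t)$, I obtain $|P_x(s(t),t)| \le |w_x(s(t),t)| = 2 K_1 M$. A symmetric construction with $\tilde w(x,t) := K_2\bigl[2M'(x-s(t)) - M'^2(x-s(t))^2\bigr]$ on the right-hand strip yields $|Q_x(s(t),t)| \le 2 K_2 M'$; substitution into the Stefan condition then gives the claim with $H := 2\mu_1 K_1 M + 2\mu_2 K_2 M'$.

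The main obstacle is the apparent circularity in ingredient (c): validating the barrier requires control of $|s'(t)|$, which is precisely what we want to prove. I would resolve this by the standard slab-bootstrap argument. Theorem \ref{thm:existence1} furnishes $s \in C^{1+\alpha/2}$, so on a short initial slab $[0,t_1]$ the quantity $\sup |s'|$ is already finite, and one may choose $M$ meeting the inequality in (c) in terms of this slab-bound. The crucial observation is that once the comparison $P \le w$ is established, the resulting bound $|s'(t)| \le 2\mu_1 K_1 M + 2\mu_2 K_2 M'$ involves only the $T$-independent quantities $K_i$, $\mu_i$, $d_i$, $\max|f|$, $\max|g|$ and $\|P_0'\|_\infty, \|Q_0'\|_\infty$. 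Propagating the estimate slab by slab therefore produces the same $M$ (hence the same $H$) on every slab, and the argument yields a bound independent of $T$, as required.
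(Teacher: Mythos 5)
Your overall plan is the Du--Lin strip-barrier technique (quadratic supersolution in a thin strip $s(t)-M^{-1}<x<s(t)$, compare, read off the gradient at the free boundary), which is a genuinely different route from the paper's. The paper instead builds a \emph{stationary} solution $(U,V)$ of the free boundary system \eqref{stationsolution} with compatible Stefan condition $\mu_1 U'(a)+\mu_2 V'(a)=0$ and uses an elementary geometric crossing argument as $s(t)$ passes an arbitrary fixed point $a$. The paper's choice of a stationary comparison object is not an accident: it sidesteps precisely the obstruction that your argument runs into.

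The gap is in step (c) and in the bootstrap you offer to close it, and it is real, not cosmetic. In Du and Lin's Lemma~2.2 (the diffusive logistic model) one knows a priori that $s'(t)>0$, so in the computation
\[
w_t - d_1 w_{xx} - f(w) = -s'(t)\,w_x + 2 d_1 K_1 M^2 - f(w)
\]
the term $-s'(t)w_x$ is \emph{nonnegative} (since $w_x\le 0$ in the strip), and one may drop it and choose $M$ from $2d_1K_1M^2 \ge \max_{[0,K_1]}|f|$ alone, with no reference to $s'$. In problem \eqref{p}, however, $s'(t)=-\mu_1 P_x(s(t),t)-\mu_2 Q_x(s(t),t)$ is the difference of a nonnegative and a nonpositive term and has no a priori sign: the free boundary can move either way. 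When $s'(t)<0$, the term $-s'(t)w_x$ is negative of size up to $2K_1M|s'(t)|$, and the supersolution inequality becomes $2d_1K_1M^2 \ge 2K_1M|s'(t)|+\max|f|$, which forces $M\gtrsim |s'(t)|/d_1$. The resulting output of the argument, $|s'(t)|\le 2\mu_1 K_1 M + 2\mu_2 K_2 M'$, therefore depends on $M$, which in turn was chosen using the very slab-bound you want to improve; it is not expressed in ``$T$-independent quantities'' as you claim, because $M$ itself carries the slab-dependence. Moreover the bootstrap does not self-improve: with $M\approx A/d_1$ one gets a new bound of order $(2\mu_1 K_1/d_1)\,A$, which is smaller than $A$ only if $2\mu_1 K_1/d_1<1$, and there is no reason this holds for general $\mu_1, K_1, d_1$. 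Finally, even if you try to apply the $P$-barrier only on time intervals where $s'\ge 0$ (where it is valid with $M$ independent of $s'$), and the $Q$-barrier where $s'\le 0$, you cannot restart the comparison at a sign-change time $t_*$: the initial comparison $w(\cdot,t_*)\ge P(\cdot,t_*)$ on the strip requires exactly the gradient bound $|P_x(s(t_*),t_*)|\lesssim K_1M$ that you are trying to prove. The paper's stationary barrier $(U,V)$ is immune to all of this because it is time-independent, so no sign condition on $s'$ enters.
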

\begin{proof}
For any fix $a\in (-\infty,\infty)$, with no loss of generality, we may assume $a\in[-l,l](l>0)$, then we consider the following problem
\begin{equation}\label{stationsolution}
\left\{
\begin{array}{l}
d_1U''+f(U)=0,\hskip14mm y<a,\ t>0,\\
d_2V''+g(V)=0,\hskip14mm y>a,\ t>0,\\
\mu_1U'(a)+\mu_2V'(a)=0,\quad  \  t>0,\\
U(a)=V(a)=0,\\
U(-l)=2L_U,\ V(l)=2L_V.
\end{array}
\right.
\end{equation}
where $L_U=\sup\{\|P(\cdot,t)\|_{L^\infty}:t>0\}\ L_V=\sup\{\|Q(\cdot,t)\|_{L^\infty}:t>0\}$.
We know $(U,V)$ is the solution of \eqref{stationsolution}. i.e., $(U,V)$ is a stationary solution of \eqref{p}. Choose $(U,V)$ satisfying $|U'(a)|>w_1, V'(a)>w_2,$ where $w_1=\sqrt{\frac{2}{d_1}\int_0^1f(s)ds},w_2=\sqrt{\frac{2}{d_2}\int_0^1g(s)ds}$, then $V'(a)=-\frac{\mu_1}{\mu_2}U'(a)>\frac{\mu_1}{\mu_2}w_1$, i.e., $V'(a)>\max\left\{w_2,\frac{\mu_1}{\mu_2}w_1\right\}$. We select $(U,V)$ satisfying $\displaystyle\min V'(a)\geq\max\left\{\max_{x\in[0,\infty)}Q'_0(x),w_2,\frac{\mu_1}{\mu_2}w_1\right\},\displaystyle\min|U'(a)|\geq\max\left\{\max_{x\in(-\infty,0]}P'_0(x),w_1\right\}$. Denote $|U'(a)|=:\alpha_0\in [\min|U'(a)|,\max|U'(a)|], \mbox{ correspondingly, } V'(a)=:\beta_0\in [\min V'(a),\max V'(a)]$.

{\bf Case \ 1}: $s(t)$ first moves across the point $(a,0)$ at $x$-axis from the right to the left, $Q(x,t)$ must cross with $V(x)$ at $(a,0)$. Therefore, $Q_x(s(t),t)\leq V'(a)=\beta_0, \mbox{ for }s'(t)\leq0, \mbox{ we have }-P_x(s(t),t)\leq-U'(a)=\alpha_0$, so $|s'(t)|\leq\mu_1|P_{x}(s(t),t)|+\mu_2|Q_{x}(s(t),t)|\leq\mu_1\alpha_0+\mu_2\beta_0=:H$.

{\bf Case \ 1-1}: $s(t)$ stop at the point $(a,0)$, we have $|s'(t)|\leq H$;

{\bf Case \ 1-2}: $s(t)$ moves backwards and then moves towards the left and cross the point $(a,0)$ again, this case can be discussed similarly as above;

{\bf Case \ 1-3}: $s(t)$ moves across the point $(a,0)$, then $P(x,t)$ must separate with $U(x)$ temporarily. Even if $s(t)$ moves towards the right and cross the point $(a,0)$ again, we can discuss this case as above, so $|s'(t)|\leq H$.

Since $a$ is arbitrary, so the conclusion of Lemma follows.
\end{proof}
\subsection{Comparison principle}
\begin{lem}\label{lem:lower}
Suppose that $f,g\in C^1([0,\infty))$ satisfying $f(0)=g(0)=0$, $T\in (0,\infty)$, $\underline{s}\in C^1([0,T])$, $\underline{u}\in C(\overline{D}_{1T})\cap C^{2,1}(D_{1T})$ with $D_{1T}=\{(x,t)\in\mathbb{R}^2:x\in (-\infty,\underline{s}(t)],\ 0<t\leq T\}$, $\underline{v}\in C(\overline{D}_{2T})\cap C^{2,1}(D_{2T})$ with $D_{2T}=\{(x,t)\in\mathbb{R}^2:x\in [\underline{s}(t),\infty),\ 0<t\leq T\}$ and
\begin{equation*}\left\{
\begin{array}{l}
\underline{u}_t\leq d_1\underline{u}_{xx}+f(\underline{u}), \hskip12mm x\in(-\infty,\underline{s}(t)),\ 0<t\leq T,\\
\underline{v}_t\geq d_2\underline{v}_{xx}+g(\underline{v}), \hskip13mm  x\in(\underline{s}(t),\infty),\ 0<t\leq T, \\
\underline{u}(\underline{s}(t),t)=\underline{v}(\underline{s}(t),t)=0,\ 0<t<T,\\
\underline{s}'(t)\geq-\mu_1\underline{u}_x(\underline{s}(t),t)-\mu_2\underline{v}_x(\underline{s}(t),t),\ 0<t<T. \\
\end{array}\right.
\end{equation*}
\end{lem}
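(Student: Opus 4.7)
The stated hypotheses define a subsolution triple $(\underline{u}, \underline{v}, \underline{s})$; the intended conclusion, to be completed by the natural initial ordering $\underline{s}(0) \leq s_0$, $\underline{u}(\cdot,0) \leq P_0$ on $(-\infty, \underline{s}(0)]$, $\underline{v}(\cdot,0) \geq Q_0$ on $[\underline{s}(0), \infty)$, together with compatible behavior at infinity, should be that $\underline{s}(t) \leq s(t)$ on $[0,T]$, $\underline{u}(x,t) \leq P(x,t)$ on the left overlap, and $\underline{v}(x,t) \geq Q(x,t)$ on the right overlap, where $(P,Q,s)$ is the solution from Theorem \ref{thm:existence1}. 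My plan is the classical first-touch contradiction argument adapted to the two-phase Stefan-type free boundary.

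Let $t^\ast \in (0,T]$ be the infimum of times at which one of these orderings first fails. Setting $w := P - \underline{u}$ on $\{x \leq \underline{s}(t),\ 0 < t \leq t^\ast\}$, the mean-value theorem applied to $f$ turns the two PDE inequalities into a linear parabolic inequality $w_t - d_1 w_{xx} - c(x,t) w \geq 0$ with bounded coefficient $c$, with $w(\cdot,0) \geq 0$ and lateral boundary value $w(\underline{s}(t),t) = P(\underline{s}(t), t) \geq 0$ (using $\underline{s}(t) \leq s(t)$ for $t < t^\ast$). The strong maximum principle then excludes an interior first zero of $w$, and the analogous argument on $W := \underline{v} - Q$ on $\{x \geq s(t),\ 0 < t \leq t^\ast\}$ excludes interior touching on the right. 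This reduces the failure to $\underline{s}(t^\ast) = s(t^\ast)$. At this contact point, Hopf's lemma applied to $w$ gives $P_x(s(t^\ast),t^\ast) < \underline{u}_x(\underline{s}(t^\ast), t^\ast)$, and applied to $W$ gives $Q_x(s(t^\ast),t^\ast) > \underline{v}_x(\underline{s}(t^\ast), t^\ast)$. Inserting into the Stefan equations yields
\[
s'(t^\ast) = -\mu_1 P_x(s(t^\ast),t^\ast) - \mu_2 Q_x(s(t^\ast),t^\ast) > -\mu_1 \underline{u}_x(\underline{s}(t^\ast),t^\ast) - \mu_2 \underline{v}_x(\underline{s}(t^\ast),t^\ast) \leq \underline{s}'(t^\ast),
\]
which contradicts $(s - \underline{s})'(t^\ast) \leq 0$, a relation forced by $s - \underline{s} \geq 0$ on $[0, t^\ast)$ with equality at $t^\ast$.

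The main obstacle is applying the strong maximum principle and Hopf's lemma on a spatial domain extending to infinity. Even with the uniform $L^\infty$ bounds from Lemma \ref{lem:stbounded} and an analogous bound for the subsolution, one must propagate the initial ordering from $\pm\infty$ inward, because the direct strong maximum principle on unbounded domains requires a growth restriction. I would handle this by introducing auxiliary barriers such as $\varepsilon\, e^{\lambda(\underline{s}(t) - x)}$ on the left region (and its symmetric counterpart on the right), with $\lambda$ chosen large enough that the perturbed function $w + \varepsilon e^{\lambda(\underline{s}(t)-x)}$ still satisfies a parabolic inequality of the same sign; apply the maximum principle on truncations $[-R, \underline{s}(t)]$, and then let $R \to \infty$ followed by $\varepsilon \to 0$. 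Coordinating this truncation with the moving free boundary, and ensuring that Hopf's lemma at the contact point survives the limiting procedure, is the delicate technical point of the argument.
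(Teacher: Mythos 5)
Your overall strategy -- first-touch contradiction, linearization via the mean value theorem, strong maximum principle on the two one-phase regions, Hopf's lemma at a boundary contact, and exponential barriers combined with truncation to handle the unbounded spatial domain -- is exactly the approach behind Lemma~5.7 of \cite{DuLin}, which the paper cites for this result. However, the proof as written has a genuine gap in the final step.

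First, the Hopf-lemma sign on the right region is wrong. With $W=\underline{v}-Q\geq0$ on $x\geq s(t)$ and $W(s(t^*),t^*)=0$, the inward normal at $x=s(t^*)$ points in the $+x$ direction, so Hopf gives $W_x(s(t^*),t^*)>0$, i.e.\ $Q_x(s(t^*),t^*)<\underline{v}_x(\underline{s}(t^*),t^*)$ -- the opposite of what you wrote. As a consequence, with your stated signs the middle inequality $-\mu_1P_x-\mu_2Q_x>-\mu_1\underline{u}_x-\mu_2\underline{v}_x$ does not follow (one term increases, the other decreases); it only follows after the sign is corrected. Second, and independently, your displayed chain reads $s'(t^*)>(\cdot)\leq\underline{s}'(t^*)$, which is not a transitive chain and therefore does not yield $s'(t^*)>\underline{s}'(t^*)$; the ``contradiction'' you invoke is not actually derived. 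The source of the trouble is the inequality $\underline{s}'\geq-\mu_1\underline{u}_x-\mu_2\underline{v}_x$ in the lemma statement: this appears to be a typographical error in the paper. For a lower solution in the sense stated (so that $\underline{s}\leq s$ is the conclusion), the free-boundary condition must be $\underline{s}'\leq-\mu_1\underline{u}_x-\mu_2\underline{v}_x$, which is precisely the sign the paper uses when it applies this lemma in the proof of Lemma~\ref{lem:suplowsolution} (there written as $\underline{z}'\leq-\mu_1\underline{u}_z-\mu_2\underline{v}_z-c$), and also matches the one-phase convention in \cite{DuLin}. With that correction and the corrected Hopf sign, the chain becomes
\[
s'(t^*)=-\mu_1P_x-\mu_2Q_x>-\mu_1\underline{u}_x-\mu_2\underline{v}_x\geq\underline{s}'(t^*),
\]
which does contradict $(s-\underline{s})'(t^*)\leq0$. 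You should have flagged the inconsistency in the stated hypothesis rather than reproduce it, because as it stands your argument does not close. The remainder of the proposal (the barrier/truncation treatment of the unbounded domain and the use of boundedness of $w$, $W$) is sound in outline.
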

If
$$\underline{u}(x,0)\leq P_0(x) \mbox{ in } (-\infty,s_0],\  \underline{v}(x,0)\geq Q_0(x) \mbox{ in } [s_0,\infty) \mbox{ and } \underline{s}_0\leq s_0,$$
where $(P,Q,s)$ is a solution of \eqref{p}, then
\begin{equation*}
\begin{array}{c}
\underline{u}(x,t)\leq P(x,t) \mbox{ for } x\in (-\infty,s(t)] \mbox{ and } t\in(0,T],\\
\underline{v}(x,t)\geq Q(x,t) \mbox{ for } x\in [s(t),\infty) \mbox{ and } t\in(0,T],\\
\underline{s}(t)\leq s(t) \mbox{ for }t\in(0,T].
\end{array}
\end{equation*}

The proof of Lemma \ref{lem:lower} is identical to that of Lemma 5.7 in \cite{DuLin}, so we omit the details here.
\begin{remark}\label{rem:low}
The triple $(\underline{u},\underline{v},\underline{s})$ is often called a lower solution of problem \eqref{p} on $[0,T]$ with initial data $(\underline{u}(x,0),\underline{v}(x,0),\underline{s}_0)$. An upper solution can be defined analogously by reversing all the inequalities.
\end{remark}

\section{The Proof of Theorem \ref{thm:asymptoticbehavior} }\label{sec:asymptoticbehavior}
In this section, $f,g$ are always assumed to be of (f$_M$) or (f$_B$) type.

\subsection{Converge to 1 uniformly at infinity}\label{subsec:sup}
We consider the solution of $d_1U''+f(U)=0$ with compact supports, if $f$ is of type (f$_M$) (resp. (f$_B$)), for each $m\in(0,1)$ (resp. $m\in(\bar{\theta},1)$), consider the trajectory $\Gamma$ given by $d_1U'^2=F(U)-F(m)$, which connect $(0,\sqrt{(F(0)-F(m))/d_1})$ and $(0,-\sqrt{(F(0)-F(m))/d_1})$ through $(m,0)$, and the solution $U_m$ satisfies $d_1U''_m+f(U_m)=0<U_m\leq m \mbox{ in }(-L_m,L_m),$ where
\begin{equation}\label{Lm}
L_m:=\int_0^m\frac{ds}{\sqrt{(F(s)-F(m))/d_1}},\ m\in(\bar{\theta},1).
\end{equation}

\begin{lem}\label{lem:positivebound}
(1) If $f,g$ are of (f$_M$) type and the initial data $P_0,Q_0$ satisfy \eqref{initialdataforfm}. $P$ is the solution of the problem
\begin{equation}\label{problemp}
\left\{
\begin{array}{l}
P_{t}-d_1P_{xx}=f(P),\hskip15mm x<s(t),\ t>0,\\
P(s(t),t)=0, \hskip23mm  \ t>0,\\
P(x,0)=P_0(x),\hskip20mm x\leq s_0.
\end{array}
\right.
\end{equation}
and $Q$ is the solution of the problem
\begin{equation}\label{problemq}
\left\{
\begin{array}{l}
Q_{t}-d_2Q_{xx}=g(Q),\hskip15mm x>s(t),\ t>0,\\
Q(s(t),t)=0, \hskip23mm  \ t>0,\\
Q(x,0)=Q_0(x),\hskip20mm x\geq s_0,
\end{array}
\right.
\end{equation}
then for any $p_0,q_0\in(0,1)$, there exist $T,M>0$ such that
\begin{equation*}
P(x,t)\geq1-\frac{3}{4}p_0,\ x\leq-M,\ t\geq T,\quad Q(x,t)\leq1+\frac{1}{2}q_0,\ x\geq M, \ t\geq T.
\end{equation*}

(2) If $f,g$ are of type (f$_B$) and the initial data $P_0,Q_0$ satisfy (iii) in Remark \ref{rem:rem1}. $P$ is the solution of \eqref{problemp}
and $Q$ is the solution of \eqref{problemq}, then for any $p_0,q_0\in(0,\frac{4}{3}(1-\bar{\theta}))$, there exist $T,M>0$ such that
\begin{equation*}
P(x,t)\geq1-\frac{3}{4}p_0,\ x\leq-M,\ t\geq T,\quad Q(x,t)\leq1+\frac{1}{2}q_0,\ x\geq M, \ t\geq T.
\end{equation*}
\end{lem}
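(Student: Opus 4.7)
I would prove the two bounds separately. The upper bound on $Q$ is straightforward: since $Q_0$ is bounded, take $K\ge 1$ with $Q_0\le K$ and let $\eta(t)$ solve $\eta'=g(\eta)$, $\eta(0)=K$. Then $\eta(t)\downarrow 1$ (since $g\le 0$ on $[1,\infty)$ for both (f$_M$) and (f$_B$) types), and viewed as a function of $(x,t)$ it is a super-solution of \eqref{problemq}: constant in $x$, satisfying the ODE with equality, and $\eta(s(t),t)=\eta(t)\ge 0=Q(s(t),t)$ at the moving boundary. Standard parabolic comparison on \eqref{problemq} then gives $Q(x,t)\le\eta(t)$ for all $x\ge s(t)$, and picking $T$ so that $\eta(t)\le 1+\tfrac12 q_0$ for $t\ge T$ finishes that half (with any $M>0$ working).

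The harder half is the lower bound on $P$. Set $m:=1-\tfrac12 p_0$; in the monostable case $m\in(0,1)$ trivially, and in the bistable case the restriction $p_0<\tfrac43(1-\bar\theta)$ yields $m>\bar\theta$, so $U_m$ from \eqref{Lm} exists on $[-L_m,L_m]$ with $\max U_m=m$ and the super-level set $\{U_m\ge 1-\tfrac34 p_0\}$ is a nondegenerate interval $[-\ell_m,\ell_m]$. The hypothesis on $P_0$ yields $\gamma>0$ (resp.\ $\gamma>\theta$) and $R_0>0$ with $P_0\ge\gamma$ on $(-\infty,-R_0]$. Since $\gamma$ may be much smaller than $m$, I would bootstrap in two steps. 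First, use small auxiliary bumps $U_{m_0}$ with $m_0\in(0,\gamma)$ (resp.\ $m_0\in(\bar\theta,\gamma)$), placed as translates $U_{m_0}(\cdot-x_k)$ on a sequence of disjoint supports tiling $(-\infty,-R_0]$, so that $U_{m_0}(\cdot-x_k)\le m_0\le P_0$ on each support. Each is a stationary sub-solution of \eqref{problemp}, valid so long as its support lies strictly to the left of $s(t)$; by $|s'|\le H$ from Lemma \ref{lem:stbounded}, this holds on a time-window extending with the bump's distance from $s_0$. A classical spreading argument (hair-trigger for (f$_M$), threshold for (f$_B$) above $\bar\theta$) applied to the dynamic evolution of these sub-solutions then shows that at some finite time $T_1$, $P(\cdot,T_1)\ge m$ on $(-\infty,-R_1]$ for some $R_1>0$. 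Second, restarting at $t=T_1$ with this stronger lower bound, I place full-strength bumps $U_m(\cdot-x_n)$ at centers $x_n=-R_1-L_m-2n\ell_m$, $n=0,1,2,\ldots$; on their supports $P(\cdot,T_1)\ge m\ge U_m(\cdot-x_n)$, so each is a stationary sub-solution for all later times in the active window. The super-level sets $x_n+[-\ell_m,\ell_m]$ tile $(-\infty,-M]$ with $M:=R_1+L_m-\ell_m$, yielding $P(x,t)\ge 1-\tfrac34 p_0$ for such $x$ and all $t\ge T_1$.

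The main obstacle is that Lemma \ref{lem:stbounded} only bounds $|s'|$ and cannot rule out $s(t)\to-\infty$; if this happened, the rightmost bumps would successively be overtaken by the free boundary and the corresponding sub-solution inequalities would fail. The resolution is that, for each finite $t$, at most finitely many bumps can have $x_n+L_m\ge s(t)$; discarding those still leaves an infinite tail of valid bumps whose super-level sets cover $(-\infty,\min(s(t),-M))$, which is exactly the domain in which the lemma's conclusion is meaningful (since $P$ is defined only on $\{x<s(t)\}$). The bootstrap step itself is also delicate: converting the Cauchy-problem spreading of each $U_{m_0}$-bump into a genuine lower bound for $P$ on \eqref{problemp} requires that, provided $s(t)$ stays far to the right of the bump on the relevant time interval, the restriction of the free-space Cauchy solution approximates $P$ up to an exponentially small boundary-layer correction controlled by interior parabolic regularity.
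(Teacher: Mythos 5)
Your treatment of the $Q$-bound is essentially the paper's (ODE super-solution $\eta'=g(\eta)$, $\eta(0)=K$, decreasing to $1$), and your step 2 (tiling $(-\infty,-M]$ with full-strength bumps $U_m$ whose super-level sets abut) is also in the spirit of what the paper needs to persist the lower bound for $t\geq T$. The divergence, and the gap, is in your step 1. You want to propagate the small lower bound $\gamma$ up to $m=1-\tfrac12 p_0$ via ``hair-trigger'' spreading of the auxiliary bumps. But the Cauchy evolution of $U_{m_0}(\cdot-x_k)$ is \emph{not} a sub-solution of \eqref{problemp}: at the free boundary $x=s(t)$ the Cauchy solution eventually becomes positive while $P(s(t),t)=0$, so the inequality needed for comparison has the \emph{wrong sign} there. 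This is not an ``exponentially small boundary-layer correction'' that can be absorbed by interior regularity --- the comparison principle simply does not give $P\geq (\text{Cauchy solution})$. What one must use instead is a Dirichlet problem on a fixed half-line $(-\infty,b]$ with $b<\inf_{t\le T_2}s(t)$ and zero boundary data, whose solution $w$ genuinely lies below $P$ on the common domain. Quantifying how fast this Dirichlet solution reaches $1-\tfrac34 p_0$ in the interior is then the entire technical content of the lemma; the paper does it by comparing $w$ to the spatially homogeneous ODE solution $\eta(t)$ through the polynomially weighted quantity $\rho(x)[w(x,t)-\eta(t)]$ with $\rho(x)=\tfrac{1}{1+(x+10M_2)^2}$, which yields an explicit, boundary-aware error bound. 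You assert the analogous spreading statement but do not supply an argument that survives the Dirichlet boundary.

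A second, independent gap occurs in your bistable case: you choose $m_0\in(\bar\theta,\gamma)$, but the hypothesis (iii) of Remark~\ref{rem:rem1} only guarantees $\gamma>\theta$, and $\theta<\bar\theta$. If $\gamma\in(\theta,\bar\theta]$ the interval $(\bar\theta,\gamma)$ is empty and no compactly supported bump $U_{m_0}$ exists (recall $U_m$ is defined in \eqref{Lm} only for $m>\bar\theta$ in the bistable case). The paper's order of operations matters precisely here: it first runs the ODE $\eta'=f(\eta)$ from a value $m\in(\theta,\bar\theta]$ up past $\bar\theta$ (possible because $f>0$ on $(\theta,1)$), and only afterward fixes the bump radius $R=L_{\bar\theta+\epsilon}$; no compactly supported sub-solution is needed below level $\bar\theta$. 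Your construction, by contrast, requires a compactly supported profile from the start, which the hypothesis does not license.
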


\begin{proof} (1)
By Lemma \ref{lem:stbounded}, $|s'(t)|\leq H \mbox{ for } 0< t\leq T_1$, so we can choose $M_1>0$ large enough such that $\max_{t\in(0,T_1]}|s'(t)|T_1\leq M_1$. Since $\liminf_{x\rightarrow-\infty}u_0(x)>0$, so there exists $0<\sigma\ll1 \mbox{ and } M_2>M_1>0$ such that when $x<-M_2$, we have $u_0(x)\geq \sigma>0$. Let $\eta(t)$ be the solution of
$$\eta_t=f(\eta) \mbox{ on } [0,\infty), \ \eta(0)=\sigma.$$
Since $f(\cdot)>0 \mbox{ in }(0,1), \mbox{ for any }p_0\in(0,1)$ and $T_2=\int_\sigma^{1-\frac{1}{4}p_0}\frac{ds}{f(s)}$, we have $\eta(T_2)=1-\frac{1}{4}p_0$.

We fix $R=L_{1-\frac{3}{4}p_0}$. Let $L\gg R$ be a constant to be determined later(we may assume $L<9M_2$ with no loss of generality) and $w_0(x)$ be a function satisfying
\begin{equation*}
\begin{array}{c}
w_0(x)=\sigma \ \mbox{ when }x\in (-\infty,-10M_2+L-1],\ w_0(-10M_2+L)=0, \\
w'_0(x)\leq0 \ \mbox{ when }x\in (-10M_2+L-1,-10M_2+L).
\end{array}
\end{equation*}
Let $w(x,t)$ be the solution of the following problem
\begin{equation*}
\left\{
\begin{array}{l}
w_t-d_1w_{xx}=f(w),\hskip15mm \forall \ x\in(-\infty,-10M_2+L],\ t>0,\\
w(-10M_2+L,t)=0, \hskip12mm  \forall \ t>0,\\
w(x,0)=w_0(x),\hskip21mm \forall \ x\in(-\infty,-10M_2+L].
\end{array}
\right.
\end{equation*}
Set $\rho(x):=\frac{1}{1+(x+10M_2)^2}$ and $\zeta(x,t):=\rho(x)[w(x,t)-\eta(t)]$ satisfying $\zeta_t-d_1\zeta_{xx}=4d_1x\rho\zeta_x+[2d_1\rho+f']\zeta$. Denote $\Lambda:=2d_1+\max_{0\leq s\leq1}f'(s)$, so we have
$$\displaystyle\max_{x\in(-\infty,-10M_2+L]}\{\rho(x)|w(x,t)-\eta(t)|\}\leq e^{\Lambda t}\max_{x\in(-\infty,-10M_2+L]}\{\rho(x)|w_0(x)-\sigma|\}\leq\frac{e^{\Lambda t}}{1+(L-1)^2}.$$
Take $L=L(\sigma)=1+\sqrt{2(1+R^2)e^{\Lambda T_2}/p_0-1}$, when $x\in(-\infty,-10M_2+R]$, we have
$$|w(x,T_2)-\eta(T_2)|\leq \frac{1}{\rho(x)}\frac{e^{\Lambda T_2}}{1+(L-1)^2}\leq \frac{(1+R^2)e^{\Lambda T_2}}{1+(L-1)^2}=\frac{1}{2}p_0.$$
Consequently, we obtain $w(x,T_2)\geq\eta(T_2)-\frac{1}{2}p_0=1-\frac{3}{4}p_0$.

Since $P_0(x)\geq\sigma>0 \mbox{ for } x\in(-\infty,-10M_2+L]$, thus $P_0(x)\geq w_0(x) \mbox{ for } x\in(-\infty,-10M_2+L]$. By comparison principle, we have
\begin{equation*}
P(x,t)\geq w(x,t) \mbox{ for } x\in(-\infty,-10M_2+L], \ t\geq0.
\end{equation*}
In particular, we get $P(x,T_2)\geq w(x,T_2)=1-\frac{3}{4}p_0 \mbox{ for } x\in [-10M_2-R,-10M_2+R]$. By using the same method, we can repeat the above argument to prove $P(x,T_2)\geq w(x,T_2)=1-\frac{3}{4}p_0 \mbox{ for } x\in(-\infty,-10M_2-R] \mbox{ and } [-10M_2+R,-2M_2]$. Therefore, we have
\begin{equation*}
P(x,t)\geq1-\frac{3}{4}p_0, \ x\in (-\infty,-2M_2], \ t\geq T_2.
\end{equation*}

On the other hand, since $\limsup_{x\rightarrow\infty}Q_0(x)<\infty$, so there exist $K>0 \mbox{ and } M_3>M_1$ such that $|v_0(x)|\leq K \mbox{ for } x\in [M_3,\infty)$.
Let $\xi(t)$ be the solution of the following problem
\begin{equation}
\xi_t=g(\xi) \mbox{ on } [0,\infty),\ \xi(0)=\|v_0\|_{L^\infty}+1.
\end{equation}
Then $\xi(t)$ is an upper solution of \eqref{problemq}. So $Q(x,t)\leq \xi(t) \mbox{ for all }t\geq0$. Since $g(Q)<0 \mbox{ for } Q>1$, $\xi(t)$ is a decreasing function converging to 1 as $t\rightarrow\infty$. Thus, for any $q_0\in(0,1)$, there exists $T_3>0$ such that $\xi(t)\leq1+\frac{1}{2}q_0 \mbox{ for } t\geq T_3$. In particular, we have $Q(x,T_3)\leq \xi(T_3)<1+\frac{1}{2}q_0 \mbox{ for } x\in[M_3,\infty)$ and so
\begin{equation*}
Q(x,t)\leq1+\frac{1}{2}q_0 \mbox{ for } x\in[M_3,\infty),\ t\geq T_3.
\end{equation*}
Consequently, if we choose $T=\max\{T_1,T_2,T_3\} \mbox{ and }M>\max\{2M_2,M_3\}$, we obtain
\begin{equation*}
\begin{array}{l}
P(x,t)\geq1-\frac{3}{4}p_0, \ x\in(-\infty,-M], \ t\geq T,\quad
Q(x,t)\leq1+\frac{1}{2}q_0, \ x\in[M,\infty), \ t\geq T.
\end{array}
\end{equation*}

(2) If $f$ is of (f$_B$) type, since $\liminf_{x\rightarrow-\infty}u_0(x)>\theta$, so there exists $m\in(\theta,\bar{\theta}],M_2>0$ such that $u_0(x)\geq m>0 \mbox{ for } x<-M_2$. Let $\eta(t)$ be the solution of
$$\eta_t=f(\eta) \mbox{ on } [0,\infty), \ \eta(0)=m.$$
Since $f(\cdot)>0 \mbox{ in }(\theta,\bar{\theta}), \mbox{ for any }p_0\in(0,\frac{4}{3}(1-\bar{\theta})),\ $ and $T_2=\int_m^{1-\frac{1}{4}p_0}\frac{ds}{f(s)}$, we have $\eta(T_2)=1-\frac{1}{4}p_0$. Moreover, we know that $1-\frac{1}{4}p_0>\bar{\theta}+2\epsilon$ where $\epsilon=\frac{1-\bar{\theta}}{3}$. Here, we fix $R=L_{\bar{\theta}+\epsilon}$.
The following proof is the same as (1), so we omit the details. This completes the proof.
\end{proof}

\subsection{Precise estimates of the solutions in moving coordinates}\label{subsec:estimate}

In the following, we assume that $(P(x,t),Q(x,t))$ is the solution of \eqref{p}. Denote $u(z,t)=P(z+ct,t)=P(x,t),v(z,t)=Q(z+ct,t)=Q(x,t),$ which satisfies
\begin{equation*}
\left\{
\begin{array}{l}
u_{t}-d_1u_{zz}-cu_z=f(u),\hskip14mm z<s(t)-ct,\ t>0,\\
v_{t}-d_2v_{zz}-cv_z=g(v),\hskip15mm z>s(t)-ct,\ t>0.
\end{array}
\right.
\end{equation*}

\begin{lem}\label{lem:suplowsolution}
Under the assumption of Theorem \ref{thm:asymptoticbehavior}, there exists $T>0$ with some constants $\rho_1,\rho_2,p_0,q_0\mbox{ and }\vartheta$ such that
\begin{equation}\label{asymptoticbehaviorofu}\left.
\begin{array}{l}
\phi(z-\rho_1)-p_0e^{-\vartheta t}\leq u(z,t)\leq\phi(z-\rho_2)+p_0e^{-\vartheta t},
\end{array}\right.
\end{equation}
\begin{equation}\label{asymptoticbehaviorofv}\left.
\begin{array}{l}
\psi(z-\rho_2)-q_0e^{-\vartheta t}\leq v(z,t)\leq\psi(z-\rho_1)+q_0e^{-\vartheta t},\\
\end{array}\right.
\end{equation}
for all $z$ and $t\geq T$. Moreover, $|s(t)-ct|$ is bounded for all $t>0$.
\end{lem}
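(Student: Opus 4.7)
The plan is to prove Lemma~\ref{lem:suplowsolution} by constructing Fife--McLeod-type super- and sub-solutions of the free boundary problem \eqref{p} in the moving frame $z = x - ct$, and then invoking the comparison principle of Lemma~\ref{lem:lower} to obtain the squeeze \eqref{asymptoticbehaviorofu}--\eqref{asymptoticbehaviorofv}.

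First I would use Lemma~\ref{lem:positivebound} (together with a mirror statement for $v$), the uniform $C^{1+\alpha,(1+\alpha)/2}$ regularity from Theorem~\ref{thm:existence1}, and the bound $|s'(t)|\leq H$ from Lemma~\ref{lem:stbounded}, to pick a time $T>0$ at which $(u(\cdot,T), v(\cdot,T))$ is uniformly close to $1$ near $\mp\infty$. This lets me fix constants $\rho_1 < s(T) - cT < \rho_2$ and small $p_0, q_0 > 0$ so that
\begin{equation*}
\phi(z-\rho_1) - p_0 \leq u(z,T) \leq \phi(z-\rho_2) + p_0, \qquad \psi(z-\rho_2) - q_0 \leq v(z,T) \leq \psi(z-\rho_1) + q_0
\end{equation*}
for all relevant $z$, providing the initial ordering needed for comparison.

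Next I would propose the super-solution ansatz
\begin{equation*}
\bar{u}(z,t) = \phi\bigl(z - \rho_2 - \eta(t)\bigr) + p_0 e^{-\vartheta(t-T)}, \qquad \bar{v}(z,t) = \psi\bigl(z - \rho_2 - \eta(t)\bigr) - q_0 e^{-\vartheta(t-T)},
\end{equation*}
with monotone increasing shift $\eta(t) = \sigma(1 - e^{-\vartheta(t-T)})$ and a corresponding free boundary $\bar{s}(t)$ chosen so that $\bar{u}$ vanishes there. Using the traveling wave identity $d_1\phi'' + c\phi' = -f(\phi)$, the PDE residual for $\bar{u}$ collapses to
\begin{equation*}
-\eta'(t)\phi'\bigl(z - \rho_2 - \eta(t)\bigr) - p_0 \vartheta e^{-\vartheta(t-T)} + f(\phi) - f\bigl(\phi + p_0 e^{-\vartheta(t-T)}\bigr),
\end{equation*}
which is nonnegative provided $\sigma$ is large (so that the first term, positive since $\phi' < 0$, dominates away from $\phi \approx 1$) and $\vartheta$ is small relative to $|f'(1)|$ (so that the Lipschitz contribution of $f$ is absorbed near $\phi \approx 1$). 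The Stefan-type inequality for $\bar{s}$ is then verified using $c = -\mu_1\phi'(0) - \mu_2\psi'(0)$ together with $\eta' > 0$. A mirror construction with $\rho_1$ in place of $\rho_2$, opposite shift direction, and sign-flipped perturbations yields the sub-solution.

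Lemma~\ref{lem:lower} then delivers the squeeze $(\underline{u},\underline{v},\underline{s}) \leq (u,v,s) \leq (\bar{u},\bar{v},\bar{s})$ on $[T,\infty)$ in the appropriate sense, and the claimed bounds \eqref{asymptoticbehaviorofu}--\eqref{asymptoticbehaviorofv} follow after absorbing the exponentially small wave-shift $\sigma e^{-\vartheta(t-T)}$ into the additive errors; the bound on $|s(t)-ct|$ combines $|\bar{s}(t)-ct|, |\underline{s}(t)-ct|\leq |\rho_1|+|\rho_2|+\sigma$ on $[T,\infty)$ with the a priori bound $|s(t)-s(0)|\leq HT$ on $[0,T]$ from Lemma~\ref{lem:stbounded}. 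The main obstacle I foresee is forcing the Fife--McLeod super-solution to satisfy the Dirichlet condition $\bar{u}=0$ at its own moving free boundary, since the additive $p_0 e^{-\vartheta(t-T)}$ term is incompatible with $\bar u$ vanishing exactly where $\phi$ does; this will require either an additional exponentially small perturbation of $\bar s$ (compatible with the Stefan inequality) or a cutoff that localizes the correction away from the boundary. The interplay of both with the bistable case, where $f$ changes sign in the transition region and no simple monotonicity argument is available, will be the delicate part of the residual sign analysis.
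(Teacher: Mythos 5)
Your overall Fife--McLeod strategy matches the paper's, and you correctly identify the essential obstruction, but you do not actually resolve it -- and the two band-aids you sketch are not how the paper proceeds, nor do they obviously work. The additive perturbation $\pm p_0 e^{-\vartheta(t-T)}$ is \emph{not} compatible with a rigid common shift $z-\rho_2-\eta(t)$ applied to both $\phi$ and $\psi$. For the lower solution one needs $\underline u=0$ and $\underline v=0$ at the same moving point $\underline z(t)$; this forces \emph{distinct} shifts for $\phi$ and $\psi$, namely an extra correction computed from $\phi^{-1}(p(t))$ on the $u$-side and from the inverse of an explicit exponential extension of $\psi$ past $z=0$ on the $v$-side. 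Concretely, the paper takes $\underline u=\max\{0,\phi(z-\xi(t)+\alpha(t))-p(t)\}$ and $\underline v=\psi(z+\eta(t)-\beta(t))+q(t)$ with $\alpha(t)=\eta(t)+\phi^{-1}(p(t))$ and $\beta(t)=\xi(t)+\lambda_0(\psi'(0))^{-1}\log(1+q(t)/\lambda_0)$; these precise formulas are what make the Dirichlet conditions hold exactly at $\underline z(t)=\xi(t)-\eta(t)$. Your ``cutoff away from the boundary'' idea does not help because, for the super-solution, the positive bump $+p_0 e^{-\vartheta t}$ would push $\bar u$ strictly above $0$ at any candidate free boundary, and truncating would destroy the super-solution property; an extra perturbation of $\bar s$ alone also cannot repair this because $\phi\ge 0$ cannot take the required negative value.

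Two further ingredients are missing from your outline. First, the extension of $\psi$ to negative arguments must be chosen with care: the paper uses $\psi(x)=\lambda_0-\lambda_0 e^{-\psi'(0)x/\lambda_0}$ on $(-\infty,0]$ and needs $\lambda_0 c<\psi'(0)d_2$ together with a smallness constraint on $q_0$ to keep the PDE residual one-signed on the extension; nothing in your residual computation accounts for this region, and it is exactly where the boundary of the lower solution lives. Second, the Stefan inequality $\underline z'(t)\le -\mu_1\underline u_z-\mu_2\underline v_z-c$ at $\underline z(t)$ is not a formality here: it imposes a genuine constraint linking the decay rates $\varrho,\vartheta$, the constants $\gamma,\lambda,\tau,r$ appearing in the shift ODEs, and the boundary slopes $\phi'(0),\psi'(0)$ through the explicit bound $\lambda<(\vartheta+\tau)q_0\lambda_0/\bigl(\mu_2\psi'(0)q_0+\lambda_0(\mu_1\phi'(\phi^{-1}(p_0))-\mu_1\phi'(0))\bigr)$. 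Your sketch asserts the Stefan inequality ``is then verified'' from $c=-\mu_1\phi'(0)-\mu_2\psi'(0)$ and $\eta'>0$, but that ignores the nonzero contributions from $\phi^{-1}(p(t))$ and from the extended $\psi$ and is not sufficient. So while your high-level plan is the right one, the parts you flag as ``delicate'' are in fact the substance of the proof, and the paper handles them in a way that differs materially from both alternatives you propose.
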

\begin{proof}
Firstly, we prove the left part of \eqref{asymptoticbehaviorofu} and the right part of \eqref{asymptoticbehaviorofv}.

We define $\underline{u}(z,t)=\max\{0,\phi(z-\xi(t)+\alpha(t))-p(t)\},\underline{v}(z,t)=\psi(z+\eta(t)-\beta(t))+q(t)$ and say $\underline{z}(t)$ the free boundary of $(\underline{u},\underline{v})$. By Lemma \ref{lem:lower}, if $(\underline{u},\underline{v},\underline{z})$ is a lower solution of $(u,v,s)$, it has to satisfy
\begin{equation}\label{subsolution}\left\{
\begin{array}{l}
\underline{u}_t-d_1\underline{u}_{zz}-c\underline{u}_z-f(\underline{u})\leq0, \ z\in(-\infty,\underline{z}(t)),\\
\underline{v}_t-d_2\underline{v}_{zz}-c\underline{v}_z-g(\underline{v})\geq0, \ z\in(\underline{z}(t),\infty),\\
\underline{u}(\underline{z}(t),t)=\underline{v}(\underline{z}(t),t)=0,\\
\underline{z}'(t)\leq-\mu_1\underline{u}_z(\underline{z}(t),t)-\mu_2\underline{v}_z(\underline{z}(t),t)-c,\\
\underline{u}(z,0)\leq u_0(z),\ \underline{v}(z,0)\geq v_0(z),\ \underline{z}(0)\leq s(0).
\end{array}\right.
\end{equation}
We will check \eqref{subsolution} one by one. Denote
$$\aligned
A[\underline{u}]:=&\underline{u}_t-d_1\underline{u}_{zz}-c\underline{u}_z-f(\underline{u})\\
=&\phi'(\xi'+\alpha')-p'-d_1\phi''-c\phi'-f(\phi-p)\\
=&-\xi'\phi'+\alpha'\phi'-p'+f(\phi)-f(\phi-p).
\endaligned$$
Here we assume that $\xi'<0,\alpha'>0$, since $f'(1)<0$, we choose $\varrho>0,0<\delta<1$ such that $f(u)-f(u-p)\leq-\varrho p \mbox{ for } |u-1|<\delta \mbox{ and }|p|<\delta$, when $\phi\in [1-\delta,1)$,
$$
A[\underline{u}]\leq-p'+f(\phi)-f(\phi-p)=-p'+f'(\varpi)p\leq-p'-\varrho p,
$$
we can choose $p=p_0e^{-\varrho t}\mbox{ and }p_0<\delta$ such that $A[\underline{u}]\leq0$. When $\phi\in [0,1-\delta)$, we choose $\phi'\leq-\gamma \mbox{ and } r>0$, then we have
$$
\aligned
A[\underline{u}]=&-\xi'\phi'+\alpha'\phi'-p'+f(\phi)-f(\phi-p)\\
\leq&\gamma\xi'-p'+f'(\varsigma)p\\
\leq&\gamma\xi'-p'+rp.
\endaligned
$$
Therefore, we choose $\xi'=\frac{p'-rp}{\gamma}=-\frac{\varrho+r}{\gamma}p(<0)\mbox{ such that } A[\underline{u}]\leq0. \mbox{ Thus } \xi(t)=z_1+z_2e^{-\varrho t}, \mbox{ where } z_1=\xi(0)-\frac{\varrho+r}{\varrho \gamma}p_0, \ z_2=\frac{\varrho+r}{\varrho \gamma}p_0$ and $\xi(0)$ is a constant to be determined later.

Next, we extend the domain of $\psi$ from $[0,\infty)$ to $(-\infty,\infty)$, define
$$
\psi(x)=\left\{\aligned&\psi(x),  \hskip22mm x\in[0,\infty),\\
&\lambda_0-\lambda_0e^{-\frac{\psi'(0)}{\lambda_0}x}, \quad x\in(-\infty,0],
\endaligned\right.
$$
where $\lambda_0>0$ and it satisfies $\lambda_0c<\psi'(0)d_2$.
$$\aligned
B[\underline{v}]:=&\underline{v}_t-d_2\underline{v}_{zz}-c\underline{v}_z-g(\underline{v})\\
=&\psi'(\eta'-\beta')+q'-d_2\psi''-c\psi'-g(\psi+q)\\
=&\eta'\psi'-\beta'\psi'+q'+g(\psi)-g(\psi+q).
\endaligned$$
We assume that $\eta'>0,\beta'<0$. Since $g'(1)<0$, so there exist $\vartheta,\nu>0$ such that $g(v)-g(v+q)\geq\vartheta q \mbox{ for } |v-1|<\nu \mbox{ and } |q|<\nu$. Here we choose $q_0<\nu\mbox{ and }\vartheta<\varrho$,  when $\psi\in [1-\nu,1)$,
$$
B[\underline{v}]\geq q'+g(\psi)-g(\psi+q)\geq q'+\vartheta q,
$$
so we choose $q=q_0e^{-\vartheta t}$ such that $B[\underline{v}]\geq0$. When $\psi\in [0,1-\nu)$, we select $\psi'\geq\lambda>0,\ \tau>0 \mbox{ and }\lambda<\frac{(\vartheta+\tau)q_0\lambda_0}{\mu_2\psi'(0)q_0+\lambda_0(\mu_1\phi'(\phi^{-1}(p_0))-\mu_1\phi'(0))}$, then we obtain
$$
\aligned
B[\underline{v}]=&\eta'\psi'-\beta'\psi'+q'+g(\psi)-g(\psi+q)\\
\geq&\eta'\psi'+q'+g(\psi)-g(\psi+q)\\
=&\eta'\psi'+q'-g'(\upsilon)q\\
\geq&\eta'\psi'+q'-\tau q\\
\geq&\lambda\eta'+q'-\tau q.
\endaligned
$$
So we choose $\eta'=\frac{-q'+\tau q}{\lambda}=\frac{\vartheta+\tau}{\lambda}q(>0)\mbox{ such that } B[\underline{v}]\geq0.\mbox{ Thus }\eta(t)=z_3-z_4e^{-\vartheta t}, \mbox{ where } z_3=\eta(0)+\frac{\vartheta+\tau}{\vartheta \lambda}q_0, \ z_4=\frac{\vartheta+\tau}{\vartheta \lambda}q_0$ and $\eta(0)$ is a constant to be determined later.

Denote $\mathbf{h}(z,t):=z+\eta(t)-\beta(t), \ w(z,t):=\lambda_0-\lambda_0e^{-\frac{\psi'(0)}{\lambda_0}\mathbf{h}}+q(t)$. $g(w)=g(0)+g'(\zeta)w$, we denote $l_0:=\max_{0\leq\zeta\leq q_0}|{g'(\zeta)}|$ , so we have $g(w)\leq l_0w\leq l_0q$.
$$
\aligned
B[w]=&w_t-d_2w_{zz}-cw_z-g(w)\\
=&\psi'(0)e^{-\frac{\psi'(0)}{\lambda_0}\mathbf{h}}[\eta'-\beta']+q'+d_2\frac{\psi'^2(0)}{\lambda_0}e^{-\frac{\psi'(0)}{\lambda_0}\mathbf{h}}
-c\psi'(0)e^{-\frac{\psi'(0)}{\lambda_0}\mathbf{h}}-g(w)\\
\geq&q'+d_2\frac{\psi'^2(0)}{\lambda_0}e^{-\frac{\psi'(0)}{\lambda_0}\mathbf{h}}-c\psi'(0)e^{-\frac{\psi'(0)}{\lambda_0}\mathbf{h}}-l_0q\\
=&-(\vartheta+l_0)q+\left(\frac{\psi'^2(0)d_2}{\lambda_0}-c\psi'(0)\right)e^{-\frac{\psi'(0)}{\lambda_0}\mathbf{h}}.
\endaligned
$$
Since $q(t)$ is bounded and $\lambda_0c<\psi'(0)d_2$, we can choose $q_0\leq\min\left\{\frac{\psi'^2(0)d_2-c\lambda_0\psi'(0)}{\lambda_0(\vartheta+l_0)},\frac{4}{3}(1-\bar{\theta})\right\}$ small such that $B[w]\geq0$.

In the following, we select $\alpha(t)=\eta(t)+\phi^{-1}(p(t)),\ \beta(t)=\xi(t)+\lambda_0(\psi'(0))^{-1}\log\left(1+\frac{q(t)}{\lambda_0}\right)$, then we have $\underline{z}(t)=\xi(t)-\eta(t)$.
$$
\aligned
w(\underline{z}(t),t)&=\lambda_0-\lambda_0e^{-\frac{\psi'(0)}{\lambda_0}(\xi(t)-\eta(t)+\eta(t)-\beta(t))}+q(t)\\
&=\lambda_0-\lambda_0e^{-\frac{\psi'(0)}{\lambda_0}\left(-\lambda_0(\psi'(0))^{-1}\log\left(1+\frac{q(t)}{\lambda_0}\right)\right)}+q(t)\\
&=\lambda_0-\lambda_0\left(1+\frac{q(t)}{\lambda_0}\right)+q(t)=0,
\endaligned
$$
and
$$
\aligned
\phi(\underline{z}(t),t)-p(t)&=\phi(\xi(t)-\eta(t)-\xi(t)+\alpha(t))-p(t)\\
&=\phi(\alpha(t)-\eta(t))-p(t)\\
&=\phi(\phi^{-1}(p(t)))-p(t)=0.
\endaligned
$$

$$\underline{z}'(t)=\xi'(t)-\eta'(t)=-\frac{\varrho+r}{\gamma}p_0e^{-\varrho t}-\frac{\vartheta+\tau}{\lambda}q_0e^{-\vartheta t}
<-\frac{\vartheta+\tau}{\lambda}q_0e^{-\vartheta t}, $$\ $$-\mu_1\underline{u}_z(\underline{z}(t),t)-\mu_2\underline{v}_z(\underline{z}(t),t)-c=-\mu_1\phi'(\phi^{-1}(p(t)))+\mu_1\phi'(0)-\mu_2\psi'(0)\lambda_0^{-1}q_0e^{-\vartheta t}.$$
Since $\lambda<\frac{(\vartheta+\tau)q_0\lambda_0}{\mu_2\psi'(0)q_0+\lambda_0(\mu_1\phi'(\phi^{-1}(p_0))-\mu_1\phi'(0))}$, we have
$$\underline{z}'(t)\leq-\mu_1\underline{u}_z(\underline{z}(t),t)-\mu_2\underline{v}_z(\underline{z}(t),t)-c.$$

Denote $$\rho^*:=-\eta(0)+\xi(0)-\phi^{-1}(p_0),\ \varrho^*:=-\eta(0)+\xi(0)+\lambda_0(\psi'(0))^{-1}\log\left(1+\frac{q_0}{\lambda_0}\right),$$ $$\Sigma:=\xi(0)-\frac{\varrho+r}{\varrho \gamma}p_0-\eta(0)-\frac{\vartheta+\tau}{\vartheta \lambda}q_0,\ \rho_1:=-\eta(0)+\xi(0)-\frac{\varrho+r}{\varrho \gamma}p_0-\frac{\vartheta+\tau}{\vartheta \lambda}q_0.$$
By Lemma \ref{lem:positivebound}, we can select $\eta(0)$ positive and large enough while $\xi(0)$ negative such that

$$\phi(z-\rho^*)-p_0\leq 1-\frac{3}{4}p_0\leq u(z,t) \mbox{ for } z\in(-\infty,-M],\ t\geq T,$$

$$\psi(z-\varrho^*)+q_0\geq 1+\frac{1}{2}q_0\geq v(z,t) \mbox{ for } z\in[M,\infty),\ t\geq T.$$
Thus, we know that $s(t)\geq \underline{z}(t)+ct,$ and
$\displaystyle\liminf_{t\rightarrow\infty}(s(t)-ct)\geq\Sigma.$
Consequently, $(\underline{u},\underline{v},\underline{z})$ is a lower solution of $(u,v,s)$ for $t\geq T.$ It follows that
$$u(z,t)\geq\phi(z-\xi(t)+\alpha(t))-p(t)\geq\phi(z-\rho_1)-p_0e^{\vartheta t}$$
and
$$v(z,t)\leq\psi(z+\eta(t)-\beta(t))+q(t)\leq\psi(z-\rho_1)+q_0e^{\vartheta t}.$$
Using the same method, we can construct an upper solution in the similar way. The proof is now completed.
\end{proof}

\begin{lem}\label{lem:closeaftert}
Under the assumption of Theorem \ref{thm:asymptoticbehavior}, there exists a function $\omega(\varepsilon)$, defined for small positive $\varepsilon$, satisfying $\lim_{\varepsilon\rightarrow0}\omega(\varepsilon)=0$. And if there exists $T>0$ such that $|u(z,T)-\phi(z-\rho_0)|<\varepsilon,\ |v(z,T)-\psi(z-\rho_0)|<\varepsilon$ for some $\rho_0$, then
$$|u(z,t)-\phi(z-\rho_0)|<\omega(\varepsilon),\ |v(z,t)-\psi(z-\rho_0)|<\omega(\varepsilon)$$
for all $z$ and all $t\geq T$.
\end{lem}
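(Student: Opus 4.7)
The strategy is to mimic, on the shifted time interval $[T,\infty)$, the super- and sub-solution construction of Lemma \ref{lem:suplowsolution}, but to calibrate all the free parameters so that the two barriers differ from the frozen traveling-wave profile $(\phi(z-\rho_0),\psi(z-\rho_0))$ by amounts controlled by $\varepsilon$. The conclusion then follows by applying the comparison principle (Lemma \ref{lem:lower} and Remark \ref{rem:low}) on $[T,\infty)$.

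Concretely, I would look for a sub-solution of the same form
$$\underline{u}(z,t)=\max\{0,\phi(z-\xi(t)+\alpha(t))-p(t)\},\qquad \underline{v}(z,t)=\psi(z+\eta(t)-\beta(t))+q(t),$$
with $p(t)=p_0 e^{-\vartheta(t-T)}$, $q(t)=q_0 e^{-\vartheta(t-T)}$, and with $\xi,\eta,\alpha,\beta$ determined by the same ODEs used in Lemma \ref{lem:suplowsolution}. The differential inequalities $A[\underline{u}]\le 0$, $B[\underline{v}]\ge 0$, the auxiliary $w$-barrier that handles the continuation of $\psi$ to the left of zero, and the free-boundary inequality $\underline{z}'(t)\le-\mu_1\underline{u}_z-\mu_2\underline{v}_z-c$ are all verified exactly as before. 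The new input is the choice of $p_0,q_0$ and of the initial shifts: take $p_0=K\varepsilon$, $q_0=K\varepsilon$ for a fixed large constant $K$, and choose $\xi(T),\eta(T)$ so that $\underline{z}(T)=\xi(T)-\eta(T)=\rho_0$. Because the construction in Lemma \ref{lem:suplowsolution} makes $|\xi(t)-\xi(T)|$ and $|\eta(t)-\eta(T)|$ decay exponentially and bounds them by constant multiples of $p_0+q_0$, both barriers stay within $O(\varepsilon)$ of $\phi(z-\rho_0)$ and $\psi(z-\rho_0)$ for every $t\ge T$.

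The heart of the matter is the initial ordering at $t=T$. The hypothesis $|u(z,T)-\phi(z-\rho_0)|<\varepsilon$ and $|v(z,T)-\psi(z-\rho_0)|<\varepsilon$ forces the true free boundary $s(T)-cT$ to differ from $\rho_0$ by at most $O(\varepsilon/\inf|\phi'|)$ on any compact set on which $\phi'$ is bounded away from zero; outside such a window I would exploit the exponential decay of $\phi'$ at $-\infty$ and of $\psi'$ at $+\infty$ to absorb the shift into the $K\varepsilon$ correction $p_0,q_0$. A symmetric construction (with the signs of $\alpha,\beta,p,q,\xi,\eta$ flipped) produces an upper solution $(\bar u,\bar v,\bar z)$. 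The main obstacle I expect is exactly this pointwise comparison at $t=T$ near the free boundary, where the traveling wave vanishes and the two phases meet: one must arrange $\alpha(T),\beta(T)$ so that $\underline{u}$ and $\underline{v}$ both vanish at the common free boundary $\underline{z}(T)$ while simultaneously lying on the correct side of the true solution, which is precisely why the cut-off $\max\{0,\cdot\}$ and the flexibility in the initial shifts are needed.

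Putting the pieces together and invoking Lemma \ref{lem:lower}, one obtains
$$\underline{u}(z,t)\le u(z,t)\le \bar u(z,t),\qquad \bar v(z,t)\le v(z,t)\le\underline{v}(z,t),$$
together with $\underline{z}(t)\le s(t)-ct\le\bar z(t)$, for all admissible $z$ and all $t\ge T$. Since every barrier agrees with the traveling-wave profile up to a uniform error bounded by a constant multiple of $\varepsilon$ (with the constant depending on $f,g,\phi,\psi,d_i,\mu_i$ but not on $\varepsilon$), the claim follows with, say, $\omega(\varepsilon)=C\varepsilon$, which tends to $0$ as $\varepsilon\to 0$.
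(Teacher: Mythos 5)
Your proposal is correct and takes essentially the same route as the paper: the paper's proof of Lemma~\ref{lem:closeaftert} is a two-line remark that one may re-run the sub-/super-solution construction of Lemma~\ref{lem:suplowsolution} with $p_0,q_0,|\rho^*-\rho_0|,|\varrho^*-\rho_0|$ all $O(\varepsilon)$, which forces $|\rho_1-\rho_0|=O(\varepsilon)$ and $|\rho_2-\rho_0|=O(\varepsilon)$, and then invokes the estimates of that lemma. You have simply spelled out the calibrations left implicit there, including the initial matching near the free boundary at $t=T$ and the resulting choice $\omega(\varepsilon)=C\varepsilon$.
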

\begin{proof}
In the proof of Lemma \ref{lem:suplowsolution}, we may take $p_0=O(\varepsilon),\ q_0=O(\varepsilon),\ |\rho^*-\rho_0|=O(\varepsilon) \mbox{ and } |\varrho^*-\rho_0|=O(\varepsilon)$. Hence also $|\rho_1-\rho_0|=O(\varepsilon),\ |\rho_2-\rho_0|=O(\varepsilon)$ and the conclusion follows from Lemma \ref{lem:suplowsolution}.
\end{proof}

\subsection{Convergence of the solutions}
By Lemma \ref{lem:suplowsolution}, we know that there exist $C,T>0$ such that
$$-C\leq s(t)-ct\leq C \mbox{ for } t\geq T.$$
Here we assume that $C>\max\{\rho_1,\rho_2\}$.
Denote
$$k(t)=ct-2C$$
and define
$$\widetilde{u}(x,t)=P(x+k(t),t),\widetilde{v}(x,t)=Q(x+k(t),t),\widetilde{s}(t)=s(t)-k(t), t\geq T.$$
Obviously,
$$C\leq \widetilde{s}(t)\leq 3C \mbox{ for } t\geq T.$$
By simple calculation, we know that $(\widetilde{u},\widetilde{v},\widetilde{s})$ satisfies
\begin{equation*}\left\{
\begin{array}{l}
\widetilde{u}_t-d_1\widetilde{u}_{xx}-c\widetilde{u}_x-f(\widetilde{u})=0, \quad x\in(-\infty,\widetilde{s}(t)),t\geq T,\\
\widetilde{v}_t-d_2\widetilde{v}_{xx}-c\widetilde{v}_x-g(\widetilde{v})=0, \ \quad x\in(\widetilde{s}(t),\infty),t\geq T,\\
\widetilde{u}(\widetilde{s}(t),t)=\widetilde{v}(\widetilde{s}(t),t)=0,\ \quad \quad t\geq T,\\
\widetilde{s}'(t)=-\mu_1\widetilde{u}_x(\widetilde{s}(t),t)-\mu_2\widetilde{v}_x(\widetilde{s}(t),t)-c,t\geq T.\\
\end{array}\right.
\end{equation*}
Let $t_n\rightarrow\infty$ be an arbitrary sequence satisfying $t_n\geq T$ for every $n\geq 1$.

Define
$$k_n(t)=k(t+t_n),\ \widetilde{u}_n(x,t)=\widetilde{u}(x,t+t_n),\ \widetilde{v}_n(x,t)=\widetilde{v}(x,t+t_n),\ \widetilde{s}_n(t)=\widetilde{s}(t+t_n).$$
\begin{lem}\label{lem:sequencebounded} Subject to a subsequence,
\begin{equation*}
\widetilde{s}_n(t)\rightarrow G(t) \mbox{ in } C_{loc}^{1+\alpha/2}(\mathbb{R}^1), \widetilde{u}_n\rightarrow \widetilde{U} \mbox { in  } C_{loc}^{1+\alpha,(1+\alpha)/2}(D_u) \mbox{ and }\widetilde{v}_n\rightarrow \widetilde{V} \mbox { in } C_{loc}^{1+\alpha,(1+\alpha)/2}(D_v),
\end{equation*}
where $0<\alpha<1, D_u=\{(x,t):-\infty<x<G(t),t\in \mathbb{R}^1\},D_v=\{(x,t): G(t)<x<\infty,t\in \mathbb{R}^1\}$ and $(\widetilde{U}(x,t),\widetilde{V}(x,t),G(t))$ satisfies
\begin{equation}\label{lemequation1}\left\{
\begin{array}{l}
\widetilde{U}_t-d_1\widetilde{U}_{xx}-c\widetilde{U}_x=f(\widetilde{U}), \quad (x,t)\in D_u,\\
\widetilde{V}_t-d_2\widetilde{V}_{xx}-c\widetilde{V}_x=g(\widetilde{V}), \quad (x,t)\in D_v,\\
\widetilde{U}(G(t),t)=\widetilde{V}(G(t),t)=0,  \quad   t\in \mathbb{R}^1, \\
G'(t)=-\mu_1 \widetilde{U}_x(G(t),t)-\mu_2 \widetilde{V}_x(G(t),t)-c,\quad  t\in \mathbb{R}^1.
\end{array}\right.
\end{equation}
\end{lem}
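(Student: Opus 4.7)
The plan is to derive uniform Hölder estimates for the three sequences and then apply Arzelà--Ascoli to extract a convergent subsequence, followed by passing to the limit in the equations. The uniform bounds I will use come from three sources already in the paper: the $L^\infty$ bound on $(P,Q)$ (via Lemma \ref{lem:positivebound} and the ODE comparison used there), the uniform enclosure $C\le \widetilde{s}(t)\le 3C$ established just before the statement, and the Lipschitz bound $|s'(t)|\le H$ from Lemma \ref{lem:stbounded}. Together these give $\|\widetilde u_n\|_\infty$, $\|\widetilde v_n\|_\infty$, $\|\widetilde s_n\|_\infty$, and $\|\widetilde s_n'\|_\infty$ uniformly bounded on each compact time interval (once $n$ is large enough that $t_n+t$ falls into the range where the estimates apply).

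To upgrade to Hölder regularity up to the moving boundary, I would straighten the free boundary via $y=x-\widetilde s_n(t)$ on each side. The equation for $\widetilde u_n$ becomes a uniformly parabolic equation on the fixed half-line $\{y<0\}$ with bounded, Hölder continuous coefficients (involving $c+\widetilde s_n'(t)$) and a homogeneous Dirichlet condition at $y=0$; similarly for $\widetilde v_n$ on $\{y>0\}$. Interior plus boundary parabolic Schauder estimates applied on compact subboxes then yield uniform $C^{1+\alpha,(1+\alpha)/2}$ bounds on the transformed unknowns, and inserting the resulting bounds on $(\widetilde u_n)_x,(\widetilde v_n)_x$ at the boundary into the Stefan condition upgrades $\widetilde s_n'$ to a family uniformly bounded in $C^{\alpha/2}$ on compact time intervals, i.e.\ $\widetilde s_n$ is uniformly bounded in $C^{1+\alpha/2}_{loc}(\R)$.

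Arzelà--Ascoli then produces a subsequence and limits $\widetilde s_n\to G$ in $C^{1+\alpha/2}_{loc}(\R)$, $\widetilde u_n\to\widetilde U$ in $C^{1+\alpha,(1+\alpha)/2}_{loc}(D_u)$ and $\widetilde v_n\to\widetilde V$ in $C^{1+\alpha,(1+\alpha)/2}_{loc}(D_v)$. Passing to the limit in the interior PDEs in \eqref{lemequation1} is immediate because $C^{1+\alpha,(1+\alpha)/2}_{loc}$ convergence controls all the derivatives appearing (and $f,g$ are $C^1$). The Dirichlet condition $\widetilde U(G(t),t)=\widetilde V(G(t),t)=0$ follows from joint continuity together with $\widetilde s_n\to G$, and the Stefan condition $G'(t)=-\mu_1\widetilde U_x(G(t),t)-\mu_2\widetilde V_x(G(t),t)-c$ passes to the limit since $\widetilde s_n\to G$ in $C^1_{loc}$ and $(\widetilde u_n)_x,(\widetilde v_n)_x$ converge uniformly on compact subsets up to the moving boundary.

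The main technical obstacle is the uniformity of the boundary Schauder estimate after straightening: the straightening transformation depends on $n$. What makes this work is precisely that $\widetilde s_n$ and $\widetilde s_n'$ are uniformly bounded (and eventually uniformly Hölder), so the change of variables and its inverse have uniformly controlled norms, reducing matters to uniformly parabolic equations on a fixed half-line with uniformly bounded coefficients and uniformly bounded $L^\infty$ data, where the standard boundary regularity theory applies with constants independent of $n$. Much of this machinery parallels the construction underlying Theorem \ref{thm:existence1}, and I would invoke it rather than redevelop it here.
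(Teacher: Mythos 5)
Your overall plan — straighten the free boundary, derive uniform parabolic estimates up to the fixed boundary, apply Arzel\`a--Ascoli, and pass to the limit in the equations and the Stefan condition — is the same as the paper's. There are two concrete differences. First, the paper straightens by the \emph{scaling} $y=x/\widetilde s_n(t)$ (exploiting $C\le\widetilde s_n\le 3C$ to keep the change of variables nondegenerate), pinning the interface at $y=1$, whereas you propose the \emph{translation} $y=x-\widetilde s_n(t)$; both work, and the translation is arguably simpler since the diffusion coefficient stays $d_1,d_2$ rather than picking up a factor $1/\widetilde s_n^2$. Second, and more importantly, the paper gets uniform $C^{1+\alpha',(1+\alpha')/2}$ bounds by first applying interior-boundary $L^p$ estimates (Lieberman, Thm.\,7.15) and then Sobolev embedding, while you propose to apply parabolic \emph{Schauder} estimates directly. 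Your version has a hidden bootstrapping gap: after straightening, $\widetilde s_n'(t)$ appears as a coefficient of the drift term, and Schauder theory requires that coefficient to be H\"older in $t$, uniformly in $n$. A priori you only have the $L^\infty$ bound $|\widetilde s_n'|\le\widetilde H$ from Lemma \ref{lem:stbounded}, so ``bounded, H\"older continuous coefficients'' is assuming exactly what you still need to prove. The paper sidesteps this by using $L^p$ theory, which only needs bounded measurable coefficients, obtains $W^{1,2}_p$ hence $C^{1+\alpha',(1+\alpha')/2}$ bounds on $\hat u_n,\hat v_n$, and then reads off uniform $C^{\alpha'/2}$ bounds on $\widetilde s_n'$ from the Stefan condition. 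If you instead want to justify the Schauder route, you would need to either invoke the uniform-in-time $C^{1+\alpha/2}$ bound on $s$ asserted in Theorem \ref{thm:existence1} (which the paper does not lean on here) or insert one $L^p$-to-H\"older step first, as the paper does. Otherwise your argument and the paper's coincide in structure and in the passage to the limit.
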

\begin{proof}
By Lemma \ref{lem:stbounded}, we have $|s'(t)|\leq H,$ for all $t>0$. So, there exists $\widetilde{H}>0$ such that $|\widetilde{s}'_n(t)|\leq \widetilde{H} \mbox{ for } t+t_n \mbox{ large and every } n\geq1.$

Define
$$y=\frac{x}{\widetilde{s}_n(t)},\hat{u}_n(y,t)=\widetilde{u}_n(x,t),\hat{v}_n(y,t)=\widetilde{v}_n(x,t),$$
then $(\hat{u}_n(y,t),\hat{v}_n(y,t),\widetilde{s}_n(t))$ satisfies
\begin{equation}\label{lemequation2}\left\{
\begin{array}{l}
(\hat{u}_n)_t-\frac{d_1}{\widetilde{s}^2_n(t)}(\hat{u}_n)_{yy}-\big[y\widetilde{s}'_n(t)+c\big]\frac{(\hat{u}_n)_y}{\widetilde{s}_n(t)}=f(\hat{u}_n),
 \quad -\infty< y\leq1,t>T-t_n,\\
(\hat{v}_n)_t-\frac{d_2}{\widetilde{s}^2_n(t)}(\hat{v}_n)_{yy}-\big[y\widetilde{s}'_n(t)+c\big]\frac{(\hat{v}_n)_y}{\widetilde{s}_n(t)}=g(\hat{v}_n),
\  \quad 1\leq y<\infty,t>T-t_n,\\
\hat{u}_n(1,t)=\hat{v}_n(1,t)=0, \hskip45mm    t>T-t_n, \\
\widetilde{s}_n'(t)=-\mu_1 \frac{(\hat{u}_n)_y(1,t)}{\widetilde{s}_n(t)}-\mu_2 \frac{(\hat{v}_n)_y(1,t)}{\widetilde{s}_n(t)}-c, \hskip18mm t>T-t_n.
\end{array}\right.
\end{equation}
For any given $R_0>0$ and $T_0\in\R^1$, by using the interior-boundary $L^p$ estimates (see Theorem 7.15 in \cite{L}) to \eqref{lemequation2} over $[-R_0-1,1]\times[T_0-1,T_0+1]\mbox{ and }[1,R_0+1]\times[T_0-1,T_0+1]$, we get, for any $p>1$
$$\|\hat{u}_n\|_{W_p^{1,2}([-R_0,1]\times[T_0,T_0+1])}\leq C_{R_0}\mbox{ for all large }n,$$
$$\|\hat{v}_n\|_{W_p^{1,2}([1,R_0]\times[T_0,T_0+1])}\leq C_{R_0}\mbox{ for all large }n,$$
where $C_{R_0}$ is a constant depending on $R_0$ and $p$ but independent of $n$ and $T_0$. Therefore, for any $\alpha'\in(0,1)$, we can choose $p>1$ large enough and use the Sobolev embedding theorem (see \cite{LSU}) to obtain
\begin{equation}\label{lemequation3}\left.
\begin{array}{l}
\|\hat{u}_n\|_{C^{1+\alpha',\frac{1+\alpha'}{2}}([-R_0,1]\times[T_0,\infty))}\leq \widetilde{C}_{R_0}\mbox{ for all large }n,\\
\|\hat{v}_n\|_{C^{1+\alpha',\frac{1+\alpha'}{2}}([1,R_0]\times[T_0,\infty))}\leq \widetilde{C}_{R_0}\mbox{ for all large }n,
\end{array}\right.
\end{equation}
where $\widetilde{C}_{R_0}$ is a constant depending on $R_0$ and $\alpha'$ but independent of $n$ and $T_0$.
From \eqref{lemequation2} and \eqref{lemequation3}, we deduce that
$$\|\widetilde{s}_n\|_{C^{1+\frac{\alpha'}{2}}([T_0,\infty))}\leq C_1\mbox{ for all large }n,$$
where $C_1$ is a constant independent of $n$ and $T_0$.
Hence by passing to a subsequence, we obtain that, as $n\rightarrow\infty$,
$$\hat{u}_n\rightarrow\hat{U}\mbox{ in } C^{1+\alpha,\frac{1+\alpha}{2}}_{loc}((-\infty,1]\times\R^1),\quad \hat{v}_n\rightarrow\hat{V}\mbox{ in } C^{1+\alpha,\frac{1+\alpha}{2}}_{loc}([1,\infty)\times\R^1),$$
$$\widetilde{s}_n\rightarrow G\mbox{ in } C^{1+\frac{\alpha}{2}}_{loc}(\R^1),$$
where $\alpha\in (0,\alpha')$. Moreover, by using \eqref{lemequation2}, we know that $(\hat{U},\hat{V},G)$ satisfies in the $W_p^{1,2}$ sense (and hence classical sense by standard regularity theory),
\begin{equation*}\left\{
\begin{array}{l}
(\hat{U})_t-\frac{d_1}{G^2(t)}(\hat{U})_{yy}-\big[yG'(t)+c\big]\frac{(\hat{U})_y}{G(t)}=f(\hat{U}),
 \quad -\infty< y\leq1,t\in \R^1,\\
(\hat{V})_t-\frac{d_2}{G^2(t)}(\hat{V})_{yy}-\big[yG'(t)+c\big]\frac{(\hat{V})_y}{G(t)}=g(\hat{V}),
\  \quad 1\leq y<\infty,t\in\R^1,\\
\hat{U}(1,t)=\hat{V}(1,t)=0, \hskip45mm    t\in \R^1, \\
G'(t)=-\mu_1 \frac{(\hat{U})_y(1,t)}{G(t)}-\mu_2 \frac{(\hat{V})_y(1,t)}{G(t)}-c, \hskip18mm t\in \R^1.
\end{array}\right.
\end{equation*}
Define $\widetilde{U}(x,t)=\hat{U}(\frac{x}{G(t)},t), \widetilde{V}(x,t)=\hat{V}(\frac{x}{G(t)},t)$, then $(\widetilde{U},\widetilde{V},G)$ satisfies  \eqref{lemequation1} and
$$\lim_{n\rightarrow\infty}\|\widetilde{u}_n- \widetilde{U}\|_{C_{loc}^{1+\alpha,(1+\alpha)/2}(D_u)}=0,$$
$$\lim_{n\rightarrow\infty}\|\widetilde{v}_n- \widetilde{V}\|_{C_{loc}^{1+\alpha,(1+\alpha)/2}(D_v)}=0.$$
\end{proof}
Since $C\leq \widetilde{s}(t)\leq 3C\mbox{ for }t\geq T$, so
$$C\leq G(t)\leq 3C \mbox{ for } t\in \R^1.$$
By the proof of Lemma \ref{lem:suplowsolution}, we have
$$\widetilde{u}_n(x,t)\leq\phi(x-2C-\rho_2)+p_0e^{-\vartheta(t+t_n)},$$
$$\widetilde{v}_n(x,t)\geq\psi(x-2C-\rho_2)-q_0e^{-\vartheta(t+t_n)}.$$
Letting $n\rightarrow\infty$, we obtain
$$\widetilde{U}(x,t)\leq\phi(x-2C-\rho_2)\mbox{ for all }t\in\R^1,x<G(t),$$
$$\widetilde{V}(x,t)\geq\psi(x-2C-\rho_2)\mbox{ for all }t\in\R^1,x>G(t).$$
Define
$$R_u=\inf\{R:\widetilde{U}(x,t)\leq \phi(x-R) \mbox{ for } (x,t)\in D_u\},$$
$$R_v=\inf\{R:\widetilde{V}(x,t)\geq \psi(x-R) \mbox{ for } (x,t)\in D_v\},$$
then
$$\widetilde{U}(x,t)\leq \phi(x-R_u) \mbox{ for } (x,t)\in D_u,\widetilde{V}(x,t)\geq \psi(x-R_v)\mbox{ for } (x,t)\in D_v.$$
Denote $R_*=\max\{R_u,R_v\}$, hence
$$\widetilde{U}(x,t)\leq \phi(x-R_*) \mbox{ for } (x,t)\in D_u,\widetilde{V}(x,t)\geq \psi(x-R_*)\mbox{ for } (x,t)\in D_v$$
and
$$C\leq\displaystyle\inf_{t\in\R^1} G(t)\leq\sup_{t\in\R^1}G(t)\leq R_*\leq 3C.$$

\begin{lem}\label{lem:sequence1}
$R_*=\sup_{t\in \mathbb{R}^1}G(t)$.
\end{lem}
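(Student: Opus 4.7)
The direction $R_* \geq \sup_t G(t)$ is built into the definitions: the boundary identity $\widetilde{U}(G(t),t)=0$ combined with $\widetilde{U}(x,t)\leq \phi(x-R_*)$ forces the strictly positive part of $\phi(\cdot-R_*)$ to extend up to $G(t)$, so $G(t)\leq R_*$ for every $t$. The reverse inequality $R_* \leq \sup_t G(t)$ carries the substantive content. I argue by contradiction: suppose $R_* > S := \sup_t G(t)$ and, without loss of generality, $R_*=R_u$ (the case $R_*=R_v$ is symmetric). Set $W(x,t):=\phi(x-R_u)-\widetilde{U}(x,t)\geq 0$ on $\overline{D_u}$. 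Since $\phi(\cdot-R_u)$ is a stationary solution of the $\widetilde{U}$-equation in these moving coordinates, subtracting its equation from that of $\widetilde{U}$ gives the linear parabolic equation
\begin{equation*}
W_t - d_1 W_{xx} - c W_x = a(x,t)\, W, \qquad a(x,t):=\int_0^1 f'\!\left(\theta\,\phi(x-R_u)+(1-\theta)\widetilde{U}(x,t)\right)d\theta,
\end{equation*}
with $a$ bounded. On the lateral boundary, $W(G(t),t)=\phi(G(t)-R_u)\geq\phi(S-R_u)>0$ uniformly in $t$, because $\phi$ is strictly decreasing with $\phi(0)=0$. The strong maximum principle then forces $W>0$ in the interior of $D_u$; the alternative $W\equiv 0$ would give $\widetilde{U}(G(t),t)=\phi(G(t)-R_u)>0$, contradicting $\widetilde{U}(G(t),t)=0$.

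By the infimum definition of $R_u$ there exist sequences $\epsilon_n\searrow 0$ and $(x_n,t_n)\in D_u$ with $\widetilde{U}(x_n,t_n)>\phi(x_n-R_u+\epsilon_n)$; combined with $\widetilde{U}\leq\phi(\cdot-R_u)$ and the uniform Lipschitz bound on $\phi$, this yields $W(x_n,t_n)\to 0$. I now rule out each possible limiting behaviour of $(x_n,t_n)$ in $\overline{D_u}$. If a subsequence converges to $(x^*,t^*)\in\overline{D_u}$ with $x^*$ and $t^*$ finite, then $W(x^*,t^*)=0$ contradicts either the strict interior positivity (if $x^*<G(t^*)$) or the uniform lower bound $\phi(S-R_u)>0$ on the lateral boundary (if $x^*=G(t^*)$). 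If instead $t_n\to\pm\infty$ while $x_n$ stays bounded, a further application of the compactness argument of Lemma~\ref{lem:sequencebounded} to the time-shifted family $\widetilde{u}(\cdot,\cdot+t_n)$ produces a new limit solution $(\widetilde{U}^\infty,\widetilde{V}^\infty,G^\infty)$ on $\mathbb{R}$; the inequalities $\widetilde{U}^\infty\leq\phi(\cdot-R_u)$ and $\sup G^\infty\leq S$ pass to the limit, the touching point persists at $(\lim x_n,0)$ in the new solution, and the strong-maximum-principle contradiction of the previous step applies again.

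The principal obstacle, which I expect to be the technical heart of the proof, is the remaining scenario $x_n\to-\infty$. There both $\widetilde{U}(x_n,t_n)$ and $\phi(x_n-R_u+\epsilon_n)$ converge to $1$, so $W(x_n,t_n)\to 0$ holds automatically and no contradiction is visible at the level of values. The contradiction must instead be extracted from the exponential tail behaviour at $-\infty$: linearising $f$ at $u=1$ gives $1-\phi(z)\sim C_0 e^{\mu z}$ as $z\to-\infty$, where $\mu>0$ is the positive root of $d_1\mu^2+c\mu+f'(1)=0$, while the sandwich $\phi(x-2C-\rho_1)\leq \widetilde{U}(x,t)\leq \phi(x-2C-\rho_2)$ from Lemma~\ref{lem:suplowsolution} (after taking $t_n\to\infty$) forces $1-\widetilde{U}(x,t)$ to decay at exactly the same rate $\mu$. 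A careful matching of the sharp leading coefficients, reflecting that $R_u$ is the \emph{exact} infimum in its definition, pins the spatial tail of $\widetilde{U}$ to that of the shifted travelling wave at the shift $R_u=S$, contradicting $R_u>S$ and completing the proof.
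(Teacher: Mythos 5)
Your decomposition into boundary/interior/limit cases is sound for the first two scenarios, and those match the paper's Steps 1 and 2: the strict positivity of $W=\phi(\cdot-R_u)-\widetilde U$ in the interior and on the lateral boundary, and the time-translation compactness argument to exclude $|t_n|\to\infty$ with $x_n$ bounded, both follow essentially as you describe (the paper phrases Step 2 in terms of $M_u(x)=\inf_t\bigl[\phi(x-R_*)-\widetilde U(x,t)\bigr]>0$ on $(-\infty,R_*-\delta]$, which is the same content). The genuine gap is the case you label the ``principal obstacle,'' $x_n\to-\infty$: you do not prove it, and the sketch you propose does not close it. Matching exponential decay \emph{rates} is automatic and carries no information about the shift $R_u$; the leading \emph{coefficient} of $1-\widetilde U(x,t)$ as $x\to-\infty$ is not a single constant (it can depend on $t$, and $\widetilde U$ is only an entire solution, not a traveling wave a priori), so there is no coefficient identity that ``pins'' $R_u$ to $S$. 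Even granting some asymptotic expansion, the inequalities $\phi(x-2C-\rho_1)\le\widetilde U\le\phi(x-2C-\rho_2)$ only bound the coefficient within a range determined by $\rho_1,\rho_2$, and give no reason to conclude $R_u\le S$.

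The paper handles exactly this tail via a comparison argument rather than coefficient asymptotics. It fixes $R_{10}\ll 0$ so that $\phi(\cdot-R_*)\ge 1-\epsilon_0$ on $(-\infty,R_{10}]$ and $f'<0$ on $[1-2\epsilon_0,1+2\epsilon_0]$, solves the auxiliary problem
\begin{equation*}
\bar U_t-d_1\bar U_{xx}-c\bar U_x=f(\bar U)\ \ (x<R_{10}),\qquad \bar U(R_{10},t)=\phi(R_{10}-R_*+\epsilon),\qquad \bar U(x,0)=1,
\end{equation*}
shows $\bar U$ decreases in $t$ to a stationary limit $\mathcal U$, and then proves by a linear maximum principle (sliding $R_1\to-\infty$) that $\mathcal U\equiv\phi(\cdot-R_*+\epsilon)$; here the local monotonicity of $f$ near $1$ is essential to kill the difference $W_1=\mathcal U-\Phi$. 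Because $\widetilde U$ is an \emph{entire} solution (defined for all $t\in\R$) with the correct boundary control at $x=R_{10}$ coming from $M_u(R_{10})>0$, one can compare $\widetilde U(x,t)\le\bar U(x,t-s)$ for every $s$ and let $s\to-\infty$ to get $\widetilde U\le\phi(\cdot-R_*+\epsilon)$ on the tail. Combined with the uniform positivity of $M_u$ on the compact set $[R_{10},R_*-\delta]$, this yields $\widetilde U\le\phi(\cdot-R_*+\epsilon_1)$ everywhere, contradicting the minimality of $R_u$. You should replace the coefficient-matching sketch with this comparison argument (or an equivalent one using the entire-solution structure); as written, your proof of the reverse inequality is incomplete.
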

\begin{proof}
We prove it by contradiction, if not, we have $R_*>\sup_{t\in \R^1}G(t)$. Choose $\delta>0$ such that $G(t)\leq R_*-\delta \mbox{ for all } t\in \R^1$.

\textit{Step 1:} $\widetilde{U}(x,t)<\phi(x-R_*) \mbox{ for all } t\in \R^1 \mbox{ and } x\leq G(t),\widetilde{V}(x,t)>\psi(x-R_*) \mbox{ for all } t\in \R^1 \mbox{ and } x\geq G(t)$.

Otherwise, there exists $(x_0,t_0)\in D_u=\{(x,t):-\infty<x\leq G(t),t\in\R^1\}$ such that $\widetilde{U}(x_0,t_0)=\phi(x_0-R_*)\geq\phi(-\delta)>0$, therefore, $x_0<G(t_0)$. Since $\widetilde{U}(x,t)\leq\phi(x-R_*) \mbox{ in } D_u$ and $\phi(x-R_*)$ satisfies the equation \eqref{lemequation1}$_1$, by strong maximum principle, we conclude that $\widetilde{U}(x,t)\equiv\phi(x-R_*)$ in $D_{0u}:=\{(x,t):x<G(t),t\leq t_0\}$, and this contradicts with $G(t)\leq R_*-\delta$.
If there exists $(x_0,t_0)\in D_v=\{(x,t):G(t)\leq x<\infty, t\in \R^1\}$ such that $\widetilde{V}(x_0,t_0)=\psi(x_0-R_*)$.

{\bf Case 1} $x_0=R_*$. We have $\widetilde{V}(x_0,t_0)=\psi(0)=0$, so $x_0=R_*=G(t_0)$, and this contradiction implies the conclusion easily.

{\bf Case 2} $x_0>R_*$. We have $\widetilde{V}(x_0,t_0)=\psi(x_0-R_*)>\psi(0)=0$. Since $\widetilde{V}(x,t)\geq\psi(x-R_*) \mbox{ in } D_v$ and $\psi(x-R_*)$ satisfies the equation \eqref{lemequation1}$_2$, by strong maximum principle, we conclude that $\widetilde{V}(x,t)\equiv\psi(x-R_*)$ in $D_{0v}:=\{(x,t):x>G(t),t\leq t_0\}$, so $\widetilde{V}(R_*,t)=\psi(R_*-R_*)=0 \mbox{ for } t\leq t_0$ and $R_*=G(t) \mbox{ for }t\leq t_0$. This is a contradiction with $G(t)\leq R_*-\delta$.

\textit{Step 2:} $M_u(x):=\inf_{t\in\R^1}[\phi(x-R_*)-\widetilde{U}(x,t)]>0\mbox{ for } x\in (-\infty,R_*-\delta]$  and $M_v(x):=\inf_{t\in\R^1}[\widetilde{V}(x,t)-\psi(x-R_*)]>0\mbox{ for } x\in [R_*,\infty)$.
Otherwise, there exist $x_{10}\in (-\infty,R_*-\delta]$ and $x_{20}\in [R_*,\infty)$ such that $M_u(x_{10})=0,M_v(x_{20})=0$, since the definition of $R_*$ implies $M_u(x)\geq0$. By step 1, we know $M_u(x_{10})$ and $M_v(x_{20})$ can not be achieved at any finite $t$. Therefore, there exists $s_n\in\R^1$ with $|s_n|\rightarrow\infty$ such that
$$\phi(x_{10}-R_*)=\lim_{n\rightarrow\infty}\widetilde{U}(x_{10},s_n).$$
Define
$$(\widetilde{U}_n(x,t),\widetilde{V}_n(x,t),G_n(t))=(\widetilde{U}(x,t+s_n),\widetilde{V}(x,t+s_n),G(t+s_n)).$$
Then we can use the same argument as in the proof of Lemma \ref{lem:sequencebounded} to show that, by passing to a subsequence, $(\widetilde{U}_n,\widetilde{V}_n,G_n)\rightarrow(U^*,V^*,G^*)$ with $(U^*,V^*,G^*)$ satisfying
\begin{equation}\label{lemequation4}\left\{
\begin{array}{l}
U^*_t-d_1U^*_{xx}-cU^*_x=f(U^*), \quad -\infty<x<G^*(t),\ t\in\R^1,\\
V^*_t-d_2V^*_{xx}-cV^*_x=g(V^*), \quad G^*(t)<x<\infty,\ t\in\R^1,\\
U^*(G^*(t),t)=V^*(G^*(t),t)=0,  \quad   t\in \mathbb{R}^1. \\
\end{array}\right.
\end{equation}
Moreover,
\begin{equation}\label{lemequation5}\left.
\begin{array}{c}
U^*(x,t)\leq\phi(x-R_*),\ V^*(x,t)\geq\psi(x-R_*),\ G^*(t)\leq R_*-\delta,\\
\ U^*(x_{10},0)=\phi(x_{10}-R_*),\ V^*(x_{20},0)=\psi(x_{20}-R_*).
\end{array}\right.
\end{equation}
Since $(\phi(x-R_*),\psi(x-R_*))$ satisfies \eqref{lemequation4} with $G^*(t)$ replaced by $R_*$, by strong maximum principle, we have
$U^*(x,t)\equiv\phi(x-R_*)\mbox{ for } t\leq0, \ x\leq G^*(t),\ V^*(x,t)\equiv\psi(x-R_*) \mbox{ for }t\leq0, \ x\geq G^*(t)$, which is clearly impossible. On the other hand, If there exists $\tau_n\in\mathbb{R}$ with $|\tau_n|\rightarrow\infty$ such that  $\psi(x_{20}-R_*)=\displaystyle\lim_{n\rightarrow\infty}\widetilde{V}(x_{20},\tau_n)$. In the same way, we can derive a contradiction. Therefore, the conclusion follows easily.

\textit{Step 3:} Reaching a contradiction.
Choose $\epsilon_0>0$ small, $R_{10}<0$ large negative and $R_{20}>0$ large positive such that
$$\phi(x-R_*)\geq1-\epsilon_0 \mbox{ for } x\leq R_{10}, \ \psi(x-R_*)\geq1-\epsilon_0 \mbox{ for } x\geq R_{20}$$
and
$$f'(u)<0 \mbox{ for } u\in[1-2\epsilon_0,1+2\epsilon_0], \ g'(v)<0 \mbox{ for } v\in[1-\frac{1}{2}\epsilon_0,1+2\epsilon_0].$$
Then choose $\epsilon\in(0,\epsilon_0)$ small such that
$$\wt V(x,t)\geq\psi(x-R_*+\epsilon),$$
$$ \phi(R_{10}-R_*+\epsilon)\geq\phi(R_{10}-R_*)-M_u(R_{10}),\ \psi(R_{20}-R_*+\epsilon)\leq\psi(R_{20}-R_*)+M_v(R_{20}),$$
$$\phi(x-R_*+\epsilon)\geq1-2\epsilon_0 \mbox{ for } x\leq R_{10},\ \psi(x-R_*+\frac{\epsilon}{2})\geq1-\frac{1}{2}\epsilon_0 \mbox{ for } x\geq R_{20}.$$

We consider the auxiliary problems
\begin{equation}\label{auxi-1}\left\{
\begin{array}{l}
\bar{U}_t-d_1\bar{U}_{xx}-c\bar{U}_x=f(\bar{U}), \quad \quad x<R_{10},\ t>0,\\
\bar{U}(R_{10},t)=\phi(R_{10}-R_*+\epsilon), \quad t>0,\\
\bar{U}(x,0)=1, \hskip 32mm x<R_{10}
\end{array}\right.
\end{equation}
and
\begin{equation}\label{auxi-2}\left\{
\begin{array}{l}
\bar{V}_t-d_2\bar{V}_{xx}-c\bar{V}_x=g(\bar{V}), \quad \quad x>R_{20},\ t>0,\\
\bar{V}(R_{20},t)=\psi(R_{20}-R_*+\epsilon), \ t>0,\\
\bar{V}(x,0)=\psi(x-R_*+\epsilon), \quad \quad x>R_{20}.
\end{array}\right.
\end{equation}
Since $1$ is the an upper solution of \eqref{auxi-1} and \eqref{auxi-2}, the unique solution of \eqref{auxi-1} and \eqref{auxi-2} are decreasing in $t$. Obviously, $\underline{U}(x,t)=\phi(x-R_*+\epsilon),\underline{V}(x,t)=\psi(x-R_*+\frac{\epsilon}{2})$ are the lower solution of \eqref{auxi-1} and \eqref{auxi-2}. By comparison principle, we have
$$\phi(x-R_*+\epsilon)\leq\bar{U}(x,t)\leq1 \mbox{ for all }t>0, \ x<R_{10},$$
$$\psi(x-R_*+\frac{\epsilon}{2})\leq\bar{V}(x,t)\leq1 \mbox{ for all }t>0,\ x>R_{20}.$$
Therefore,
$$\mathcal{U}(x):=\lim_{t\rightarrow\infty}\bar{U}(x,t)\geq\phi(x-R_*+\epsilon) \mbox{ for all } x<R_{10},$$
$$\mathcal{V}(x):=\lim_{t\rightarrow\infty}\bar{V}(x,t)\geq\psi(x-R_*+\frac{\epsilon}{2}) \mbox{ for all } x>R_{20}.$$
Moreover, $(\mathcal{U}(x),\mathcal{V}(x))$ satisfies
\begin{equation}\label{stationsolution1}
\begin{array}{l}
-d_1\mathcal{U}_{xx}-c\mathcal{U}_x=f(\mathcal{U}) \mbox{ in } (-\infty,R_{10}), \ \mathcal{U}(-\infty)=1, \ \mathcal{U}(R_{10})=\phi(R_{10}-R_*+\epsilon),\\
-d_2\mathcal{V}_{xx}-c\mathcal{V}_x=g(\mathcal{V}) \mbox{ in } (R_{20},\infty), \ \mathcal{V}(\infty)=1,\ \mathcal{V}(R_{20})=\phi(R_{20}-R_*+\frac{\epsilon}{2}).\\
\end{array}
\end{equation}
Write $\Phi(x)=\phi(x-R_*+\epsilon),\Psi(x)=\psi(x-R_*+\frac{\epsilon}{2})$. We notice that $(\Phi(x),\Psi(x))$ also satisfies \eqref{stationsolution1}.
Moveover,
$$1-2\epsilon_0\leq\Phi(x)\leq \mathcal{U}(x)\leq1 \mbox{ for } x\leq R_{10},$$
$$1-\frac{1}{2}\epsilon_0\leq\Psi(x)\leq \mathcal{V}(x)\leq1 \mbox{ for } x\geq R_{20}.$$
Hence $W_1(x)=\mathcal{U}(x)-\Phi(x)\geq0,\ W_2(x)=\mathcal{V}(x)-\Psi(x)\geq0$ and there exist $c_1(x),c_2(x)<0$ such that
$$f(\mathcal{U}(x))-f(\Phi(x))=c_1(x)W_1(x) \mbox{ in }(-\infty,R_{10}],$$
$$g(\mathcal{V}(x))-g(\Psi(x))=c_2(x)W_2(x) \mbox{ in }[R_{20},\infty).$$
Therefore
$$-d_1W_1''-cW_1'=c_1(x)W_1\mbox{ in }(-\infty,R_{10}],\ W_1(R_{10})=0,$$
$$-d_2W_2''-cW_2'=c_2(x)W_2\mbox{ in }[R_{20},\infty),\ W_2(R_{20})=0,$$
and by the maximum principle we deduce, for any $R_1<R_{10},R_2>R_{20},$
$$W_1(x)\leq W_1(R_1) \mbox{ in }[R_1,R_{10}],\ W_2(x)\leq W_2(R_2)\mbox{ in }[R_{20},R_2].$$
Letting $R_1\rightarrow-\infty$ and $R_2\rightarrow\infty$ we deduce that $W_1(x)\leq0 \mbox{ in }(-\infty,R_{10}]$ and $W_2(x)\leq0 \mbox{ in }[R_{20},\infty).$ It follows that $W_1\equiv0,W_2\equiv0$. Hence
$$\mathcal{U}(x)\equiv\Phi(x)=\phi(x-R_*+\epsilon),\mathcal{V}(x)\equiv\Psi(x)=\psi(x-R_*+\frac{\epsilon}{2}).$$
We now look at $(\wt U(x,t),\wt V(x,t))$, since $\wt U(x,t)$ satisfies the first equation in \eqref{auxi-1}, and for any $t\in \R^1$,
$$\wt U(x,t)\leq1,\ \wt U(R_{10},t)\leq \phi(R_{10}-R_*)-M_{R_{10}}\leq \phi(R_{10}-R_*+\epsilon).$$
In the same way, we know
$$\wt V(x,t)\geq \psi(x-R_*+\epsilon),\ \wt V(R_{20},t)\geq\psi(R_{20}-R_*)+M_{R_{20}}\geq\psi(R_{20}-R+\epsilon).$$
Consequently, we use the comparison principle to deduce that
$$\wt U(x,t+s)\leq\bar{U}(x,t) \mbox{ for all }t>0, x<R_{10},s\in \R^1,$$
$$\wt V(x,t+s)\geq\bar{V}(x,t) \mbox{ for all }t>0, x>R_{20},s\in \R^1.$$
Or equivalently,
$$\wt U(x,t)\leq\bar{U}(x,t-s) \mbox{ for all }t>s, x<R_{10},s\in \R^1,$$
$$\wt V(x,t)\geq\bar{V}(x,t-s) \mbox{ for all }t>s, x>R_{20},s\in \R^1.$$
Letting $s\rightarrow-\infty$ we obtain
\begin{equation}\label{lemequationinf}
\begin{array}{l}
\wt U(x,t)\leq \mathcal{U}(x)=\phi(x-R_*+\epsilon)\mbox{ for all } x<R_{10},t\in \R^1,\\
\wt V(x,t)\geq \mathcal{V}(x)=\psi(x-R_*+\frac{\epsilon}{2})\mbox{ for all } x>R_{20},t\in \R^1.
\end{array}
\end{equation}
By Step 2 and the continuity of $M_u(x),M_v(x)$ in $x$, we have
$$M_u(x)\geq\sigma>0 \mbox{ for }x\in[R_{10},R_*-\delta],$$
$$M_v(x)\geq\sigma>0 \mbox{ for }x\in[R_*,R_{20}].$$
If we choose $\epsilon_1\in (0,\epsilon]$ is small enough we have
$$\phi(x-R_*+\epsilon_1)\geq\phi(x-R_*)-\sigma \mbox{ for } x\in [R_{10},R_*-\delta],$$
$$\psi(x-R_*+\frac{\epsilon_1}{2})\leq\psi(x-R_*)+\sigma \mbox{ for } x\in [R_*,R_{20}],$$
and so
$$\wt U(x,t)-\phi(x-R_*+\epsilon_1)\leq\sigma-M_u(x)\leq0 \mbox{ for } x\in [R_{10},R_*-\delta], t\in\R^1,$$
$$\wt V(x,t)-\psi(x-R_*+\frac{\epsilon_1}{2})\geq M_v(x)-\sigma\geq0 \mbox{ for } x\in [R_*,R_{20}],t\in \R^1.$$
Therefore we can combine with \eqref{lemequationinf} to obtain
$$\wt U(x,t)-\phi(x-R_*+\epsilon_1)\leq0 \mbox { for } x\in (-\infty,R_*-\delta], t\in\R^1,$$
$$\wt V(x,t)-\psi(x-R_*+\frac{\epsilon_1}{2})\geq0 \mbox{ for } x\in [R_*,\infty),t\in \R^1,$$
for all small $\epsilon_1\in(0,\epsilon)$, which contradicts the definition of $R_u,R_v$. The proof is complete.
\end{proof}

\begin{lem}\label{lem:supst}
There exists a sequence $\{s_n\}\subset\mathbb{R}^1$ such that
\begin{equation*}
\begin{array}{l}
G(t+s_n)\rightarrow R_* \mbox{ as } n\rightarrow\infty  \mbox{ uniformly for } t \mbox { in compact subset of } \R^1,\\
\wt U(x,t+s_n)\rightarrow\phi(x-R_*) \mbox{ as } n\rightarrow\infty  \mbox{ uniformly for } (x,t) \mbox { in compact subset of } (-\infty,R_*]\times\mathbb{R}^1, \\
\wt V(x,t+s_n)\rightarrow\psi(x-R_*) \mbox{ as } n\rightarrow\infty \mbox{ uniformly for } (x,t)  \mbox { in compact subset of } [R_*,\infty)\times\mathbb{R}^1.
\end{array}
\end{equation*}
\end{lem}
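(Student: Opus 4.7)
The plan is to exploit the identity $R_* = \sup_{t \in \R^1} G(t)$ established in Lemma \ref{lem:sequence1}: pick any sequence $\{s_n\} \subset \R^1$ realizing this supremum, so that $G(s_n) \to R_*$. Define the translates
\[
\widetilde U_n(x,t) := \widetilde U(x, t+s_n), \qquad \widetilde V_n(x,t) := \widetilde V(x, t+s_n), \qquad G_n(t) := G(t+s_n),
\]
and repeat verbatim the straightening change of variables $y = x/G_n(t)$, the interior--boundary $L^p$ estimates and the Sobolev embedding used in Lemma \ref{lem:sequencebounded}. Along a further subsequence this yields $C^{1+\alpha,(1+\alpha)/2}_{loc}$ convergence to a limit $(U^\#, V^\#, G^\#)$ solving the same free-boundary system as \eqref{lemequation1}. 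Passing to the limit in the inequalities established just before Lemma \ref{lem:sequence1} gives $G^\#(0) = R_*$, $G^\#(t) \leq R_*$ for all $t$, $U^\#(x,t) \leq \phi(x - R_*)$ on $\{x \leq G^\#(t)\}$, and $V^\#(x,t) \geq \psi(x - R_*)$ on $\{x \geq G^\#(t)\}$.

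The key step is to upgrade these inequalities to equalities. Since $G^\# \in C^{1+\alpha/2}(\R^1)$ attains its global maximum $R_*$ at the \emph{interior} point $t = 0$, I have $G^{\#\prime}(0) = 0$, and the free-boundary condition then reads
\[
\mu_1 U^\#_x(R_*, 0) + \mu_2 V^\#_x(R_*, 0) = -c = \mu_1 \phi'(0) + \mu_2 \psi'(0),
\]
the last equality being the defining relation of the wave in \eqref{erlianbo}. Meanwhile, the nonnegative differences $W_1 := \phi(\,\cdot\, - R_*) - U^\#$ and $W_2 := V^\# - \psi(\,\cdot\, - R_*)$ both vanish at $(R_*, 0)$, so one-sided spatial differencing yields $\phi'(0) \leq U^\#_x(R_*, 0)$ and $\psi'(0) \leq V^\#_x(R_*, 0)$; combined with the displayed identity, both inequalities must already be equalities.

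Next, $W_1$ and $W_2$ satisfy linear parabolic equations of the form $(\partial_t - d_i \partial_{xx} - c \partial_x - b_i)W_i = 0$ with bounded coefficients $b_i$ obtained from mean value. The strong maximum principle forces each $W_i$ either to vanish identically on the past component of $(R_*,0)$ or to be strictly positive in its interior; in the latter case Hopf's lemma produces a \emph{strict} inequality in one of $\phi'(0) \leq U^\#_x(R_*,0)$ or $\psi'(0) \leq V^\#_x(R_*, 0)$, contradicting the equalities just derived. Hence $W_1 \equiv 0 \equiv W_2$ on the past, which in turn forces $G^\#(t) \equiv R_*$ there; forward uniqueness for the free-boundary problem extends this to all $t \in \R^1$. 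Because the limit is thus uniquely determined as $(\phi(\,\cdot\, - R_*), \psi(\,\cdot\, - R_*), R_*)$, the extracted subsequence actually converges, delivering the conclusion of the lemma.

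The hard part is the Hopf step: the point $(R_*, 0)$ lies on the lateral free boundary $x = G^\#(t)$, so a direct application would require an interior paraboloid condition at the tangency. My plan to handle this is to straighten the boundary via $\bar x := x - G^\#(t)$, reducing to a half-space on which the standard parabolic Hopf lemma applies; the required smoothness at the tangency is guaranteed by the $C^{1+\alpha/2}$ regularity of $G^\#$ together with $G^{\#\prime}(0) = 0$.
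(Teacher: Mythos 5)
Your proof is correct and follows essentially the same strategy as the paper: extract a translating sequence realizing $\sup G = R_*$, use the compactness of Lemma \ref{lem:sequencebounded} to pass to a limit, then combine the free-boundary condition $G^{\#\prime}(0)=0$, the identity $c=-\mu_1\phi'(0)-\mu_2\psi'(0)$, and the parabolic Hopf lemma to force the limit to coincide with $(\phi(\cdot-R_*),\psi(\cdot-R_*),R_*)$. The only cosmetic difference is that you treat the "supremum attained" and "supremum not attained" cases in a unified way by always passing to a translated limit, whereas the paper handles them as two explicit cases, reducing the second to the first.
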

\begin{proof}
There are two possibilities:

(i) \ $R_*=\sup_{t\in\R^1}G(t)$ is achieved at some finite time $t=s_0$,

(ii) $R_*>G(t) \mbox{ for all } t\in\R^1$ and $G(s_n)\rightarrow R_*$ along some unbounded sequence $s_n$.

In case (i), we have $G'(s_0)=0$, since $\wt U(x,t)\leq \phi(x-R_*)\mbox{ for } x\leq G(t), \ \wt V(x,t)\geq\psi(x-R_*) \mbox{ for } x\geq G(t) \mbox{ and } t\in \R^1$, with $\wt U(G(s_0),s_0)=\phi(G(s_0)-R_*)=\phi(0)=0, \ \wt V(G(s_0),s_0)=\psi(G(s_0)-R_*)=\psi(0)=0$, by strong maximum principle and Hopf boundary lemma, we have
$$\wt U_x(G(s_0),s_0)>\phi'(0) \mbox{ unless } \wt U(x,t)\equiv\phi(x-R_*) \mbox{ in } D_{0u}=\{(x,t):x\leq G(t),t\leq s_0 \},$$
$$\wt V_x(G(s_0),s_0)>\psi'(0) \mbox{ unless } \wt V(x,t)\equiv\psi(x-R_*) \mbox{ in } D_{0v}=\{(x,t):x\geq G(t),t\leq s_0 \}.$$
On the other hand, we know
$$-\mu_1\wt U_x(G(s_0),s_0)-\mu_2 \wt V_x(G(s_0),s_0)-c=G'(s_0)=0,$$
but in view of
$$-\mu_1\wt U_x(G(s_0),s_0)<-\mu_1\phi'(0),\ -\mu_2 \wt V_x(G(s_0),s_0)<-\mu_2\psi'(0),$$
we have
$$-\mu_1\wt U_x(G(s_0),s_0)-\mu_2 \wt V_x(G(s_0),s_0)-c<0.$$
Therefore $G(t)\equiv R_*,\wt U(x,t)\equiv \phi(x-R_*) \mbox{ in } D_{0u} \mbox{ and }\wt V(x,t)\equiv \psi(x-R_*) \mbox{ in } D_{0v}.$ Using the uniqueness of \eqref{lemequation1} with a given initial value, we conclude that
$$\wt U(x,t)\equiv \phi(x-R_*) \mbox{ for all }x\leq G(t), t\in\R^1,$$
$$\wt V(x,t)\equiv \psi(x-R_*) \mbox{ for all }x\geq G(t), t\in\R^1,$$
so we choose $s_n\equiv s_0$ and the conclusion of the lemma holds.
In case (ii), we consider the following sequence
$$U_n(x,t)=\wt U(x,t+l_n),\ V_n(x,t)=\wt V(x,t+l_n),\ G_n(t)=G(t+l_n).$$
Using the same method as in the proof of Lemma \ref{lem:sequencebounded}, we can choose a subsequence such that
$$U_n\rightarrow  \mathbf{U} \mbox{ in } C_{loc}^{1+\alpha,\frac{1+\alpha}{2}}(D_u),\ V_n\rightarrow  \mathbf{V} \mbox{ in } C_{loc}^{1+\alpha,\frac{1+\alpha}{2}}(D_v),\ G_n\rightarrow  \mathbf{G} \mbox{ in } C_{loc}^1(\R^1),$$
and $(\mathbf{U},\mathbf{V},\mathbf{G})$ satisfies \eqref{lemequation1}. Moreover,
$$\mathbf{G}(t)\leq R_*,\ \mathbf{G}(0)=R_*.$$
Hence we are back to case (i) and thus $\mathbf{U}(x,t)\equiv \phi(x-R_*) \mbox { in } D_u,\ \mathbf{V}(x,t)\equiv \psi(x-R_*) \mbox { in } D_v,$ and $\mathbf{G}\equiv R_*.$ The conclusion of the lemma follows easily.
\end{proof}
\begin{lem}\label{thm:sequence2}
There exists $T_k\rightarrow\infty$ such that
\begin{equation*}
\begin{array}{l}
\wt s(t+T_k)\rightarrow R_* \mbox{ as } k\rightarrow\infty  \mbox{ uniformly for } t \mbox { in compact subset of } \R^1,\\
\wt u(x,t+T_k)\rightarrow\phi(x-R_*) \mbox{ as } k\rightarrow\infty  \mbox{ uniformly for } (x,t) \mbox { in compact subset of } (-\infty,R_*]\times\R^1, \\
\wt v(x,t+T_k)\rightarrow\psi(x-R_*) \mbox{ as } k\rightarrow\infty \mbox{ uniformly for } (x,t)  \mbox { in compact subset of } [R_*,\infty)\times\R^1.
\end{array}
\end{equation*}
\end{lem}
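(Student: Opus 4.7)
The plan is to pass back from the limit $(\wt U, \wt V, G)$ constructed in Lemma \ref{lem:sequencebounded} to the original moving-frame solution $(\wt u, \wt v, \wt s)$ through a diagonal extraction that chains together Lemmas \ref{lem:sequencebounded}, \ref{lem:sequence1}, and \ref{lem:supst}.

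First, I would fix any sequence $\{t_n\} \subset [T, \infty)$ with $t_n \to \infty$ and apply Lemma \ref{lem:sequencebounded} to extract a subsequence (still denoted $\{t_n\}$) along which $\wt s(\cdot + t_n) \to G$, $\wt u(\cdot, \cdot + t_n) \to \wt U$, and $\wt v(\cdot, \cdot + t_n) \to \wt V$ in the stated $C^{1+\alpha}$ topologies on compact subsets of $\R^1$, $D_u$, $D_v$ respectively. By Lemmas \ref{lem:sequence1} and \ref{lem:supst} applied to the limit triple $(\wt U, \wt V, G)$, there exists a sequence $\{\tau_m\} \subset \R^1$ such that, as $m \to \infty$,
\begin{align*}
G(\cdot + \tau_m) &\to R_* \text{ uniformly on compact subsets of } \R^1, \\
\wt U(\cdot, \cdot + \tau_m) &\to \phi(\cdot - R_*) \text{ uniformly on compact subsets of } (-\infty, R_*]\times \R^1, \\
\wt V(\cdot, \cdot + \tau_m) &\to \psi(\cdot - R_*) \text{ uniformly on compact subsets of } [R_*, \infty)\times \R^1.
\end{align*}

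Next, for each $k \geq 1$, I would choose $m_k$ large enough so that on $|t| \leq k$ one has $|G(t + \tau_{m_k}) - R_*| < 1/(2k)$, together with analogous $1/(2k)$-closeness of $\wt U(\cdot,\cdot+\tau_{m_k})$ and $\wt V(\cdot,\cdot+\tau_{m_k})$ to the traveling-wave profiles on the rectangles $[-k, R_*] \times [-k, k]$ and $[R_*, k] \times [-k, k]$. With $\tau_{m_k}$ now fixed, I would invoke the convergence $\wt s(\cdot + t_n) \to G$, $\wt u(\cdot, \cdot + t_n) \to \wt U$, $\wt v(\cdot, \cdot + t_n) \to \wt V$ on the shifted compact sets $[\tau_{m_k} - k,\, \tau_{m_k} + k]$ (and analogous rectangles in $(x,t)$) to pick $n_k$ making the corresponding differences strictly less than $1/(2k)$; simultaneously I would arrange $n_k$ large enough that $T_k := t_{n_k} + \tau_{m_k}\to\infty$ (if $\tau_{m_k}$ is not bounded below, simply impose $t_{n_k} \geq k + |\tau_{m_k}|$). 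The triangle inequality then yields $\sup_{|t| \leq k} |\wt s(t + T_k) - R_*| < 1/k$, together with the matching estimates for $\wt u$ and $\wt v$, which delivers the desired uniform convergence on arbitrary compact subsets as $k \to \infty$.

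The main obstacle is the bookkeeping in the diagonal extraction: the compact set on which the inner convergence provided by Lemma \ref{lem:sequencebounded} is invoked depends on the already-selected shift $\tau_{m_k}$, and one must ensure $T_k \to \infty$ even when $\{\tau_{m_k}\}$ is possibly unbounded below; both issues are controlled by taking $n_k$ sufficiently large at each stage. A minor technicality is that $\wt u(x, t + T_k)$ is defined only for $x \leq \wt s(t + T_k)$, so uniform convergence on compact subsets of $(-\infty, R_*] \times \R^1$ must be interpreted in the same sense as in Lemmas \ref{lem:sequencebounded}--\ref{lem:supst} (extending by $0$ at the free boundary and using $\wt s(\cdot + T_k) \to R_*$ to absorb the short interval near $x = R_*$); this is a routine matter once the $C^{1+\alpha,(1+\alpha)/2}$ estimates of Lemma \ref{lem:sequencebounded} are in hand.
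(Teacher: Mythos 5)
Your proposal is correct and follows essentially the same two-layer diagonal argument as the paper: first use Lemma \ref{lem:supst} to pick a shift $\tau_{m_k}$ (the paper's $s_{n_k}$) bringing $(\wt U,\wt V,G)$ within $1/k$ of the traveling-wave profile on a size-$k$ compact set, then use the local-uniform convergence from Lemma \ref{lem:sequencebounded} on the shifted compact set to pick $t_{n_k}$ (the paper's $t_{m_k}$) bringing $(\wt u,\wt v,\wt s)$ within $1/k$ of $(\wt U,\wt V,G)$, set $T_k = t_{n_k}+\tau_{m_k}$ with $t_{n_k}$ large enough that $T_k\to\infty$, and conclude by the triangle inequality. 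The only difference is cosmetic: you spell out the shift of the compact set to $[\tau_{m_k}-k,\tau_{m_k}+k]$ and the lower bound on $t_{n_k}$ more explicitly, and you flag the free-boundary domain issue, all of which the paper handles implicitly.
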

\begin{proof}
For $k\geq1$, define
$$D^g=[-k,k], \ D^u=[-k,R_*],\ D^v=[R_*,k].$$
By Lemma \ref{lem:supst}, there exists $s_{n_k}$ such that
$$|G(t+s_{n_k})-R_*|\leq\frac{1}{k}\mbox{ for } t\in D^g, $$
$$|\wt U(x,t+s_{n_k})-\phi(x-R_*)|\leq \frac{1}{k} \mbox{ for }(x,t)\in D^u\times D^g,$$
$$|\wt V(x,t+s_{n_k})-\psi(x-R_*)|\leq \frac{1}{k} \mbox{ for }(x,t)\in D^v\times D^g.$$
By Lemma \ref{lem:sequencebounded}, there exists $t_{m_k}$ such that $t_{m_k}+s_{n_k}>k$ and
$$|\wt s(t+t_{m_k}+s_{n_k})-G(t+s_{n_k})|<\frac{1}{k} \mbox{ for } t\in D^g,$$
$$|\wt u(x,t+t_{m_k}+s_{n_k})-\wt U(x,t+s_{n_k})|\leq \frac{1}{k} \mbox{ for }(x,t)\in D^u\times D^g,$$
$$|\wt v(x,t+t_{m_k}+s_{n_k})-\wt V(x,t+s_{n_k})|\leq \frac{1}{k} \mbox{ for }(x,t)\in D^v\times D^g.$$
Therefore we can take $T_k=t_{m_k}+s_{n_k}$, then as $k\rightarrow\infty,T_k\rightarrow\infty$ and
$$|\wt s(t+T_k)-R_*|\leq\frac{2}{k}\rightarrow0 \mbox{ uniformly for } t \mbox { in compact subset of } \R^1,$$
$$|\wt u(x,t+T_k)-\phi(x-R_*)|\leq\frac{2}{k}\rightarrow0 \mbox{ uniformly for } (x,t) \mbox { in compact subset of } (-\infty,R_*]\times\R^1,$$
$$|\wt v(x,t+T_k)-\psi(x-R_*)|\leq\frac{2}{k}\rightarrow0 \mbox{ uniformly for } (x,t) \mbox { in compact subset of } [R_*,\infty)\times\R^1.$$
\end{proof}

\subsection{The Proof of Theorem \ref{thm:asymptoticbehavior}} We now apply Lemma \ref{lem:closeaftert}, which indicate that once $\wt u$ is close to $\phi(x-R_*)$ and $\wt v$ is close to $\psi(x-R_*)$ for some $T_k$, it remains close for all later time. Thus
$$\wt u(x,t)\rightarrow\phi(x-R_*), \ \wt v(x,t)\rightarrow\psi(x-R_*) \mbox{ as }t\rightarrow\infty.$$
Denote $x^*=R_*-2C$, we have
$$P(x,t)\rightarrow\phi(x-ct-x^*), \  Q(x,t)\rightarrow\psi(x-ct-x^*) \mbox{ as }t\rightarrow\infty.$$
Moreover, $|s(t)-ct-x^*|\rightarrow0 \mbox{ as } t\rightarrow\infty$.

Next, we prove $s'(t)\rightarrow c$.

If $s'(t)$ does not converge to $c$, there exist a $\epsilon>0$ and a sequence $\{t_k\}_{k=1}^{\infty}$ such that
$|s'(t_k)-c|>\epsilon \mbox{ for } k \mbox{ and }t_k\rightarrow\infty \mbox{ as }k\rightarrow\infty$. With no loss of generality, we assume $s'(t_k)>c+\epsilon$. Since $|s(t)-ct-x^*|\rightarrow0 \mbox{ as } t\rightarrow\infty$, we can make estimate about $\wt u \mbox{ and } \wt v$ at $x=x^*$
$$
\aligned
\wt u(x^*,t_k)=&\wt u(s(t_k)-ct_k,t_k)+O(s(t_k)-ct_k-x^*) \\
=&\wt u(s(t_k)-ct_k+2C,t_k)-\wt u_x(s(t_k)-ct_k+2C,t_k)(2C)+o((2C)^2)+O(s(t_k)-ct_k-x^*)\\
=&-\wt u_x(s(t_k)-ct_k+2C,t_k)(2C)+o((2C)^2)+O(s(t_k)-ct_k-x^*)
\endaligned
$$
and
$$
\aligned
\wt v(x^*,t_k)=-\wt v_x(s(t_k)-ct_k+2C,t_k)(2C)+o((2C)^2)+O(s(t_k)-ct_k-x^*).
\endaligned
$$
Using the same method, we have
$$\aligned
\phi(-2C)=\phi(0)-\phi'(0)(2C)+o((2C)^2)
\endaligned
$$
and
$$\aligned
\psi(-2C)=\psi(0)-\psi'(0)(2C)+o((2C)^2).
\endaligned
$$
Obviously, $O(s(t_k)-ct_k-x^*)\rightarrow0 \mbox{ as } k\rightarrow\infty$.
$$\aligned
&\liminf_{k\rightarrow\infty}(\mu_1(\wt u(x^*,t_k)-\phi(-2C))+\mu_2(\wt v(x^*,t_k)-\psi(-2C)))\\
>&(c+\epsilon)(2C)+o((2C)^2)-2cC-o((2C)^2)\\
=&2\epsilon C+o((2C)^2)\\
>&\epsilon C>0
\endaligned
$$
However, this is a contradiction with the asymptotic behavior of $\wt u\mbox{ and }\wt v$. Thus $s'(t)\rightarrow c$.
This completes the proof of Theorem \ref{thm:asymptoticbehavior}.\qed
\medskip

{\bf Acknowledgement}. The authors would like to thank Professor C. C. Chen for sending
the paper \cite{CC} to them, thank Professors M. Nagayama,
K. I. Nakamura, Y. Yamada and M. L. Zhou for helpful discussions.

\begin{center}
APPENDIX:The Proof of Theorem \ref{thm:existence1}
\end{center}
\begin{thm}\label{thm:localexistenceanduniqueness}
Under the assumption of Theorem \ref{thm:existence1}, there exists a unique solution to problem \eqref{p} for small time interval $0<t\leq T(T>0)$.
\end{thm}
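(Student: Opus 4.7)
The plan is to use the classical fixed-point approach for one-phase Stefan-type free boundary problems, suitably adapted to our two-phase setting on the unbounded line. The idea is to first straighten the free boundary by a change of variables, then set up a contraction for the unknown $s(\cdot)$ alone, with $P$ and $Q$ obtained as solutions of two linear parabolic problems on the now-fixed half-lines.

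First I would straighten the free boundary by setting $y = x - s(t)$, $\wt P(y,t) = P(y+s(t),t)$, $\wt Q(y,t) = Q(y+s(t),t)$. The problem becomes
\begin{equation*}
\left\{
\begin{array}{l}
\wt P_t - s'(t)\wt P_y - d_1 \wt P_{yy} = f(\wt P), \quad y<0,\ t>0,\\
\wt Q_t - s'(t)\wt Q_y - d_2 \wt Q_{yy} = g(\wt Q), \quad y>0,\ t>0,\\
\wt P(0,t)=\wt Q(0,t)=0,\\
s'(t) = -\mu_1 \wt P_y(0,t)-\mu_2 \wt Q_y(0,t),\\
\wt P(y,0) = P_0(y+s_0), \quad \wt Q(y,0) = Q_0(y+s_0).
\end{array}
\right.
\end{equation*}
Set $s^* := -\mu_1 P_0'(s_0) - \mu_2 Q_0'(s_0)$ (using the compatibility $P_0(s_0)=Q_0(s_0)=0$ inherited from the free boundary condition at $t=0$) and, for constants $T,M>0$ to be chosen, form the closed convex set
\begin{equation*}
\Sigma_T = \bigl\{\, s\in C^{1+\alpha/2}([0,T]): s(0)=s_0,\ s'(0)=s^*,\ \|s'-s^*\|_{C^{\alpha/2}([0,T])}\le M\,\bigr\}.
\end{equation*}

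For each $s\in\Sigma_T$, the two equations for $\wt P$ and $\wt Q$ are uncoupled linear parabolic problems with prescribed drift $-s'(t)$ on the fixed half-lines $y<0$ and $y>0$ with zero Dirichlet data at $y=0$ and $C^{1+\alpha}$ initial data that is uniformly bounded. Standard boundary--interior Schauder theory on half-lines (Ladyzhenskaya--Solonnikov--Uraltseva) yields a unique classical solution in $C^{1+\alpha,(1+\alpha)/2}$ up to $y=0$, with norms (in particular $\|\wt P_y(0,\cdot)\|_{C^{\alpha/2}}$ and $\|\wt Q_y(0,\cdot)\|_{C^{\alpha/2}}$) controlled by $\|P_0\|_{C^{1+\alpha}},\|Q_0\|_{C^{1+\alpha}}$ and $\|s'\|_{C^{\alpha/2}}$. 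I then define the map
\begin{equation*}
\mathcal F(s)(t) := s_0 + \int_0^t \bigl[-\mu_1\wt P_y(0,\tau)-\mu_2\wt Q_y(0,\tau)\bigr]\,d\tau,
\end{equation*}
so that a fixed point of $\mathcal F$ in $\Sigma_T$ gives a solution of \eqref{p}. I would then verify, for $T$ small enough, that (i) $\mathcal F(\Sigma_T)\subset\Sigma_T$ (using the Schauder bound and the fact that $\mathcal F(s)'(0)=s^*$ so the $C^{\alpha/2}$-increment of $\mathcal F(s)'$ on $[0,T]$ is $O(T^{\alpha/2})$), and (ii) $\mathcal F$ is a contraction in the $C^{1+\alpha/2}$ norm: writing the difference $\wt P^1-\wt P^2$, $\wt Q^1-\wt Q^2$ associated to $s^1,s^2\in\Sigma_T$ as solutions of linear parabolic equations with zero initial/boundary data and right-hand sides $(s^1-s^2)'\wt P^i_y$ etc., the Schauder estimates for these differences yield a bound of the form $\|\mathcal F(s^1)-\mathcal F(s^2)\|_{C^{1+\alpha/2}} \le C\,T^{\alpha/2}\,\|s^1-s^2\|_{C^{1+\alpha/2}}$. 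Picking $T$ small then gives a contraction and hence a unique fixed point.

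The main obstacle I anticipate is obtaining the boundary Schauder estimate for $\wt P_y(0,t)$ and $\wt Q_y(0,t)$ with constants that are genuinely uniform in $s\in\Sigma_T$ and depend linearly on $\|s'\|_{C^{\alpha/2}}$, on an unbounded spatial domain. This requires a compatibility check at the corner $(0,0)$ (which the hypotheses $P_0,Q_0\in C^2$ and the definition of $s^*$ provide to the order needed for $C^{1+\alpha,(1+\alpha)/2}$ regularity) and a careful localization argument so that the far-field plays no role on short time intervals --- one handles this either via interior Schauder estimates away from $y=0$ combined with boundary estimates near $y=0$, or by extending the data to a weighted Hölder space in which the heat semigroup is bounded. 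Once uniqueness and existence are established on $[0,T]$, reapplying the straightened equations with the now-smooth coefficient $s'(t)$ and invoking the Schauder estimates once more yields the claimed $C^{1+\alpha,(1+\alpha)/2}\times C^{1+\alpha,(1+\alpha)/2}\times C^{1+\alpha/2}$ regularity.
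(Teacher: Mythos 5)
Your plan follows the same skeleton as the paper's proof --- straighten the free boundary by $y=x-s(t)$ and run a short-time fixed-point argument --- but it differs in the choice of iteration variable and in the estimates used, and two steps are incomplete. The paper iterates on the full triple $(u,v,s)$: given a candidate triple, the nonlinearities are frozen at $f(u),g(v)$ so that the sub-problems for the updates $\bar u,\bar v$ are genuinely linear, solved by $L^p$ parabolic theory and Sobolev embedding, and the contraction is measured in the weak sup-norm metric $\|u_1-u_2\|_{C}+\|v_1-v_2\|_{C}+\|s_1'-s_2'\|_{C}$, the smallness factor $T^{\alpha/2}$ coming from $\sup_{[0,T]}|w|\leq T^{\alpha/2}[w]_{C^{\alpha/2}}$ applied to differences that vanish at $t=0$, while the stronger Hölder bounds only serve to describe the invariant ball.

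You iterate on $s$ alone, which is a viable alternative, but (i) the sub-problems for $\wt P,\wt Q$ are \emph{not} linear as you claim: the right-hand sides $f(\wt P),g(\wt Q)$ make them semilinear, so for each $s\in\Sigma_T$ you must solve a nonlinear problem and establish Lipschitz dependence of the boundary derivatives $\wt P_y(0,\cdot),\wt Q_y(0,\cdot)$ on $s$ --- a nested argument your write-up never supplies; and (ii) the contraction bound $\|\mathcal F(s^1)-\mathcal F(s^2)\|_{C^{1+\alpha/2}}\leq CT^{\alpha/2}\|s^1-s^2\|_{C^{1+\alpha/2}}$ does not follow from the $C^{1+\alpha,(1+\alpha)/2}$ Schauder estimate, whose constant does not vanish as $T\to 0$ when the same Hölder seminorm appears on both sides. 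To close the argument you must either drop to a weaker contraction metric as the paper does, or note that the difference $W=\wt P^1-\wt P^2$ has zero initial/boundary data and a forcing vanishing at $t=0$, apply the higher-order Schauder estimate to get $W_y(0,\cdot)\in C^{(1+\alpha)/2}([0,T])$ with $W_y(0,0)=0$, and interpolate $[W_y(0,\cdot)]_{C^{\alpha/2}}\leq T^{1/2}[W_y(0,\cdot)]_{C^{(1+\alpha)/2}}$ to produce the small factor. Either route salvages the scheme, but as written the decisive contraction step is not justified.
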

\begin{proof}
Let \\$$y=x-s(t), t=t, P(x,t)=P(y+s(t),t)=u(y,t), Q(x,t)=Q(y+s(t),t)=v(y,t),$$
$$P_t=u_t-s'(t)u_y,Q_t=v_t-s'(t)v_y,P_x=u_y,Q_x=v_y,P_{xx}=u_{yy},Q_{xx}=v_{yy}.$$
Problem \eqref{p} can be transformed as follows:
\begin{equation}\label{p_1}
\left\{
\begin{array}{l}
u_{t}-d_1u_{yy}-s'(t)u_y=f(u),\hskip14mm y<0,\ t>0,\\
v_{t}-d_2v_{yy}-s'(t)v_y=g(v),\hskip15mm y>0,\ t>0,\\
u(0,t)=v(0,t)=0, \hskip26mm  \  t>0,\\
s'(t)=-\mu_1 u_{y}(0,t)-\mu_2 v_{y}(0,t),\quad \ \ \ t>0,\\
s(0)=s_0,\ s_0\in (-\infty,\infty),\\
u(y,0)=P_{0}(y+s_0)(y<0),\ v(y,0)=Q_{0}(y+s_0)(y>0).
\end{array}
\right.
\end{equation}
Assume
\begin{equation}\label{assume}
P_0\in C^2((-\infty,s_0]), Q_0\in C^2([s_0,\infty)),P_0(s_0)=Q_0(s_0)=0.
\end{equation}
Define
\begin{equation*}
\left.
\begin{array}{c}
D_{1T}=(-\infty,0)\times(0,T),D_{2T}=(0,\infty)\times(0,T),
\overline{D}_{1T}=(-\infty,0]\times[0,T],\overline{D}_{2T}=[0,\infty)\times[0,T],\\
D_1=\{u(y,t)\in C(\overline{D}_{1T}),u(y,0)=P_0(y+s_0)\},
\widetilde{D}_{1T}=\{u\in D_1,\displaystyle\sup_{y\leq0,t\in[0,T]}|u(y,t)-u(y,0)|\leq1\},\\
D_2=\{v(y,t)\in C(\overline{D}_{2T}),v(y,0)=Q_0(y+s_0)\},
\widetilde{D}_{2T}=\{v\in D_2,\displaystyle\sup_{y\geq0,t\in[0,T]}|v(y,t)-v(y,0)|\leq1\},\\
D_3=\{s\in C^1[0,T],s(0)=s_0,s'(0)=-\mu_1P'_0(s_0)-\mu_2Q'_0(s_0)\},\\
\widetilde{D}_{3T}=\{s\in D_3,\displaystyle\sup_{t\in[0,T]}|s'(t)-s'(0)|\leq1\},
\end{array}
\right.
\end{equation*}
then $D=\widetilde{D}_{1T}\times\widetilde{D}_{2T}\times\widetilde{D}_{3T}$ is a completed metric space with the distance defined as follows:
$$d((u_1,v_1,s_1),(u_2,v_2,s_2))=\|u_1-u_2\|_{C(\widetilde{D}_{1T})}+\|v_1-v_2\|_{C(\widetilde{D}_{2T})}+\|s'_1-s'_2\|_{C([0,T])},$$
for $s_1,s_2\in\widetilde{D}_{3T}, s_1(0)=s_2(0)=s_0$, we have $\|s_1-s_2\|_{C([0,T])}\leq T\|s'_1-s'_2\|_{C([0,T])}.$

Next we will prove the existence and uniqueness result by using the contraction mapping theorem. Applying $L^p$ theory and Sobolev embedding theorem, for any $(u,v,s)\in D$, the following problem
\begin{equation*}
\left\{
\begin{array}{l}
\overline{u}_{t}-d_1\overline{u}_{yy}-s'(t)\overline{u}_y=f(u),\hskip14mm y<0,\ t>0,\\
\overline{u}(0,t)=0, \hskip42mm  \ t>0,\\
\overline{u}(y,0)=P_{0}(y+s_0),\hskip27mm y\leq0,
\end{array}
\right.
\end{equation*}
has a unique solution $\overline{u}\in C^{1+\alpha,\frac{1+\alpha}{2}}(\overline{D}_{1T})$.

In the same way, the following problem
\begin{equation*}
\left\{
\begin{array}{l}
\overline{v}_{t}-d_2\overline{v}_{yy}-s'(t)\overline{v}_y=g(v),\hskip15mm y>0,\ t>0,\\
\overline{v}(0,t)=0, \hskip42mm  \ t>0,\\
\overline{v}(y,0)=Q_{0}(y+s_0),\hskip27mm y\geq0,
\end{array}
\right.
\end{equation*}
has a unique solution $\overline{v}\in C^{1+\alpha,\frac{1+\alpha}{2}}(\overline{D}_{2T})$. Moreover, we have
$$\|\overline{u}\|_{C^{1+\alpha,\frac{1+\alpha}{2}}(\overline{D}_{1T})}+\|\overline{v}\|_{C^{1+\alpha,\frac{1+\alpha}{2}}(\overline{D}_{2T})}\leq C_1,$$
where $C_1$ depends on $\alpha, |s'(0)|,\|P_0\|_{C^{1+\alpha}((-\infty,0])}, \|Q_0\|_{C^{1+\alpha}([0,\infty))}$.
Define new free boundary
$$\overline{s}(t)=-\mu_1\int_0^t\overline{u}_y(0,\tau)d\tau-\mu_2\int_0^t\overline{v}_y(0,\tau)d\tau,$$
so, $\overline{s}'(t)=-\mu_1 \overline{u}_{y}(0,t)-\mu_2 \overline{v}_{y}(0,t), \overline{s}(0)=0, \overline{s}'\in C^{\frac{\alpha}{2}}([0,T])$ and $\|\overline{s}'\|_{C^{\frac{\alpha}{2}}([0,T])}\leq C_2,$ where $C_2$ depends on $\alpha, |s'(0)|,\|P_0\|_{C^{1+\alpha}((-\infty,0])}, \|Q_0\|_{C^{1+\alpha}([0,\infty))}$.

Defining a mapping
$\mathcal{F}:D\rightarrow C(\overline{D}_{2T})\times C(\overline{D}_{2T})\times C^1([0,T]),$ by
$$\mathcal{F}(u,v,s)=(\overline{u},\overline{v},\overline{s}).$$
Obviously, $(u,v,s)$ is the fixed point of $\mathcal{F}$ if and only if $(u,v,s)$ is the solution of \eqref{p_1}.
Noting that
$$\aligned|\overline{u}(y,t)-u(y,0)|=&t^{\frac{1+\alpha}{2}}\frac{|\overline{u}(y,t)-u(y,0)|}{t^{\frac{1+\alpha}{2}}}\\
\leq&T^{\frac{1+\alpha}{2}}\displaystyle\sup_{y\leq0,t\in[0,T]}\frac{|\overline{u}(y,t)-u(y,0)|}{t^{\frac{1+\alpha}{2}}}\\
\leq&T^{\frac{1+\alpha}{2}}\|\overline{u}\|_{C^{1+\alpha,\frac{1+\alpha}{2}}(\overline{D}_{1T})}.\\
\endaligned$$
Therefore, for any $y\leq0,t\in[0,T],$ we have
$$\displaystyle\sup_{y\leq0,t\in[0,T]}|\overline{u}(y,t)-u(y,0)|\leq C_1T^{\frac{1+\alpha}{2}}.$$
In the same way, we know that
$$\displaystyle\sup_{y\geq0,t\in[0,T]}|\overline{v}(y,t)-v(y,0)|\leq C_1T^{\frac{1+\alpha}{2}}.$$
Moreover,
$$\overline{s}'(t)-s'(0)=-\mu_1\overline{u}_y(0,t)-\mu_2\overline{v}_y(0,t)+\mu_1u_y(0,0)+\mu_2v_y(0,0),$$
Consequently,
$$\aligned\displaystyle\sup_{t\in[0,T]}|\overline{s}'(t)-s'(0)|
\leq&\mu_1\sup_{t\in[0,T]}|\overline{u}_y(0,t)-u_y(0,0)|+\mu_2\sup_{t\in[0,T]}|\overline{v}_y(0,t)-v_y(0,0)|\\
\leq&\mu_1T^{\frac{\alpha}{2}}\|\overline{u}\|_{C^{1+\alpha,\frac{1+\alpha}{2}}(\overline{D}_{1T})}
+\mu_2T^{\frac{\alpha}{2}}\|\overline{v}\|_{C^{1+\alpha,\frac{1+\alpha}{2}}(\overline{D}_{2T})}\\
\leq&(\mu_1+\mu_2)C_1T^{\frac{\alpha}{2}}.
\endaligned
$$
Choose $T\leq\min\{((\mu_1+\mu_2)C_1)^{-\frac{2}{\alpha}},C_1^{-\frac{2}{1+\alpha}}\},$ then the mapping $\mathcal{F}$ maps $D$ into itself.

Next, we prove $\mathcal{F}$ is a contraction mapping if $T$ is sufficiently small. For $(u_i,v_i,s_i)\in D$, denote $(\overline{u}_i,\overline{v}_i,\overline{s}_i)=\mathcal{F}(u_i,v_i,s_i)$, then $\|\overline{u}_i\|_{C^{1+\alpha,\frac{1+\alpha}{2}}(\overline{D}_{1T})}\leq C_1, \|\overline{v}_i\|_{C^{1+\alpha,\frac{1+\alpha}{2}}(\overline{D}_{2T})}\leq C_1, \|\overline{s}_i'\|_{C^{\frac{\alpha}{2}}([0,T])}\leq C_2.$ denote
$U=\overline{u}_1-\overline{u}_2$, then
\begin{equation*}
\left\{
\begin{array}{l}
U_{t}-d_1U_{yy}-s'_1(t)U_y=f(u_1)-f(u_2)+(s'_1(t)-s'_2(t))\overline{u}_{2y},\hskip15mm y>0,\ t>0,\\
U(0,t)=0, \hskip93mm  \ t>0,\\
U(y,0)=0,\hskip93mm y\geq0.
\end{array}
\right.
\end{equation*}
Applying $L^p$ estimate and Sobolev embedding theorem
$$\|\overline{u}_1-\overline{u}_2\|_{C^{1+\alpha,\frac{1+\alpha}{2}}(\overline{D}_{1T})}\leq C_3\left(\|u_1-u_2\|_{C(\overline{D}_{1T})}+\|s_1-s_2\|_{C^1([0,T])}\right),$$
where $C_3$ depends on $\alpha, |s'(0)|,\|P_0\|_{C^{1+\alpha}((-\infty,0])}\mbox{ and }\|Q_0\|_{C^{1+\alpha}([0,\infty))}$.

In the same way, let $V=\overline{v}_1-\overline{v}_2$, then
$$\|\overline{v}_1-\overline{v}_2\|_{C^{1+\alpha,\frac{1+\alpha}{2}}(\overline{D}_{2T})}\leq C_4\left(\|v_1-v_2\|_{C(\overline{D}_{2T})}+\|s_1-s_2\|_{C^1([0,T])}\right),$$
where $C_4$ depends on $\alpha, |s'(0)|,\|P_0\|_{C^{1+\alpha}((-\infty,0])}\mbox{ and } \|Q_0\|_{C^{1+\alpha}([0,\infty))}$.
Assume $T\leq1$, we have $$\|\overline{s}'_1-\overline{s}'_2\|_{C^{\frac{\alpha}{2}}([0,T])}\leq\mu_1\|\overline{u}_{1y}-
\overline{u}_{2y}\|_{C^{\frac{\alpha}{2},0}(\overline{D}_{1T})}
+\mu_2\|\overline{v}_{1y}-\overline{v}_{2y}\|_{C^{\frac{\alpha}{2},0}(\overline{D}_{2T})},$$
$$\aligned&\|\overline{u}_1-\overline{u}_2\|_{C^{1+\alpha,\frac{1+\alpha}{2}}(\overline{D}_{1T})}+
\|\overline{v}_1-\overline{v}_2\|_{C^{1+\alpha,\frac{1+\alpha}{2}}(\overline{D}_{2T})}+
\|\overline{s}'_1-\overline{s}'_2\|_{C^{\frac{\alpha}{2}}([0,T])}\\
&\leq C_5\left(\|u_1-u_2\|_{C(\overline{D}_{1T})}+\|v_1-v_2\|_{C(\overline{D}_{2T})}+\|s'_1-s'_2\|_{C([0,T])}\right),
\endaligned$$
where $C_5$ depends on $\alpha, |s'(0)|,\|P_0\|_{C^{1+\alpha}((-\infty,0])}, \|Q_0\|_{C^{1+\alpha}([0,\infty))},\mu_1\mbox{ and }\mu_2$.

So, if we take $T:=\min\left\{1, ((\mu_1+\mu_2)C_1)^{-\frac{2}{\alpha}},C_1^{-\frac{2}{1+\alpha}},\left(\frac{1}{2C_5}\right)^{\frac{2}{\alpha}}\right\}$, then
$$\aligned&\|\overline{u}_1-\overline{u}_2\|_{C(\overline{D}_{1T})}+\|\overline{v}_1-\overline{v}_2\|_{C(\overline{D}_{2T})}
+\|\overline{s}'_1-\overline{s}'_2\|_{C([0,T])}\\
&\leq T^{\frac{1+\alpha}{2}}\left(\|\overline{u}_1-\overline{u}_2\|_{C^{1+\alpha,\frac{1+\alpha}{2}}(\overline{D}_{1T})}+
\|\overline{v}_1-\overline{v}_2\|_{C^{1+\alpha,\frac{1+\alpha}{2}}(\overline{D}_{2T})}\right)+T^{\frac{\alpha}{2}}\|\overline{s}'_1
-\overline{s}'_2\|_{C^{\frac{\alpha}{2}}([0,T])}
\\
&\leq C_5T^{\frac{\alpha}{2}}\left(\|u_1-u_2\|_{C(\overline{D}_{1T})}+\|v_1-v_2\|_{C(\overline{D}_{2T})}
+\|s'_1-s'_2\|_{C([0,T])}\right)\\
&\leq\frac{1}{2}\left(\|u_1-u_2\|_{C(\overline{D}_{1T})}+\|v_1-v_2\|_{C(\overline{D}_{2T})}
+\|s'_1-s'_2\|_{C([0,T])}\right).
\endaligned$$
For such $T$, $\mathcal{F}$ is a contraction mapping on $D$. By contraction mapping theorem, $\mathcal{F}$ has a unique fixed point $(u,v,s)\in D$.
\end{proof}
\begin{thm}\label{thm:globalexistenceanduniqueness}
Under the assumption of Theorem \ref{thm:existence1}, the unique solution of the problem \eqref{p} exists, and it can be extended to $[0,T_{max})$, where $T_{max}\leq\infty$.
\end{thm}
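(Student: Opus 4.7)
The plan is to use Theorem \ref{thm:localexistenceanduniqueness} as a local existence building block and iterate it to extend the solution to a maximal interval. First, I would apply Theorem \ref{thm:localexistenceanduniqueness} to obtain a unique classical solution $(P,Q,s)$ on some initial interval $[0,T_1]$ with $T_1>0$. Since the solution furnished by that theorem has spatial regularity $C^{1+\alpha}$, the traces $P(\cdot,T_1)$ and $Q(\cdot,T_1)$ satisfy the regularity requirements needed to restart the local existence theorem at time $t=T_1$ (inspection of the contraction-mapping proof shows that only the $C^{1+\alpha}$-norms of the initial data and the bound $|s'(0)|$ determine the length of the existence interval; the assumption $P_0\in C^2$ in \eqref{assume} serves merely to give classical sense to the boundary conditions). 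Restarting at $t=T_1$ with $(P(\cdot,T_1),Q(\cdot,T_1))$ as new initial data yields an extension to $[0,T_1+T_2]$, and local uniqueness at the join $t=T_1$ ensures the concatenation is a genuine classical solution of \eqref{p}.

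Next, I would set
$$T_{max}:=\sup\bigl\{\,T>0 : \text{problem \eqref{p} has a classical solution on } [0,T]\,\bigr\},$$
so that local existence gives $T_{max}>0$, and the restart procedure shows that for every $T<T_{max}$ a solution exists on $[0,T]$. Consequently a unique classical solution exists on the entire half-open interval $[0,T_{max})$ with $T_{max}\le\infty$. Uniqueness on all of $[0,T_{max})$ would follow by a standard connectedness argument: the set of $t\in[0,T_{max})$ on which two hypothetical solutions coincide contains $t=0$, is closed by continuity, and is relatively open by the local uniqueness clause of Theorem \ref{thm:localexistenceanduniqueness}.

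The main, fairly mild, obstacle is to ensure that the length $T_2$ of each restart interval does not collapse to zero along the iteration before reaching a prescribed finite time. This reduces to controlling the constants $C_1,\dots,C_5$ appearing in the proof of Theorem \ref{thm:localexistenceanduniqueness} at the restart time, which depend on the $C^{1+\alpha}$-norms of $P(\cdot,T_1)$ and $Q(\cdot,T_1)$ and on $|s'(T_1)|$. Lemma \ref{lem:stbounded} supplies the bound $|s'(t)|\le H$ uniformly in $t$, while $L^\infty$ bounds on $P$ and $Q$ follow from the comparison principle (Lemma \ref{lem:lower}); interior parabolic Schauder estimates applied on slabs of the form $(-\infty,s(t))\times(T_1/2,T_1)$ and its symmetric counterpart then furnish the required $C^{1+\alpha}$ bounds on the restart data. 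Thus the iteration proceeds freely up to the maximal existence time $T_{max}\le\infty$, completing the proof.
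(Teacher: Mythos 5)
Your proposal is correct and in essence mirrors the paper's argument: both hinge on (i) the local existence/uniqueness result obtained by contraction mapping, (ii) the uniform bound $|s'(t)|\le H$ from Lemma \ref{lem:stbounded}, and (iii) a bootstrap/Schauder estimate giving uniform $C^{1+\alpha}$ control of $P$ and $Q$, so that the local existence time at each restart stays bounded below. The only difference is cosmetic: you present the extension as a direct iteration together with a supremum definition of $T_{max}$ and a connectedness argument for uniqueness, whereas the paper frames exactly the same ingredients as a proof by contradiction (assume $T_{max}<\infty$, obtain a uniform $C^{1+\alpha}$ a priori bound $C_6$ on $[0,T_{max})$, then restart from $T_{max}-\tau/2$ with $\tau$ depending only on that bound to push past $T_{max}$). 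Your explicit remark that the local existence time depends only on the $C^{1+\alpha}$-norms of the data and on $|s'|$ — not on the stronger $C^2$ assumption in \eqref{assume} — is a useful clarification that is implicit, but not spelled out, in the paper; it is what makes the restart legitimate after interior parabolic regularization.
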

\begin{proof}
In order to prove the present theorem, we argue it indirectly. Assume that $T_{max}<\infty$. Since $|s'(t)|\leq H$ in $[0,T_{max})$, using bootstrap argument and Schauder's estimate yields a priori bound of $|P(x,t)|_{C^{1+\alpha}((-\infty,s(t)])}+|Q (x,t)|_{C^{1+\alpha}([s(t),\infty))}$ for all $t\in[0, T_{max})$. Let the bound be $C_6$. It follows from the proof of Theorem \ref{thm:localexistenceanduniqueness} that there exists a $\tau$ depending only on $T_{max}, C_1$ such that the solution of the problem \eqref{p} with the initial time $T_{max}-\frac{\tau}{2}$ can be extended uniquely to the time $T_{max}-\frac{\tau}{2}+\tau$ that contradicts the assumption. This completes the proof.
\end{proof}


\begin{thebibliography}{123456}

\bibitem{AW1}D. G. Aronson and H. F. Weinberger,
{\em Nonlinear diffusion in population genetics, combustion, and
nerve pulse propagation}, in Partial Differential Equations and
Related Topics, Lecture Notes in Math. 446, Springer, Berlin, 1975,
pp. 5--49.

\bibitem{AW2}D. G. Aronson and H. F. Weinberger,
{\em Multidimensional nonlinear diffusion arising in population
genetics}, Adv. in Math., {\bf 30} (1978), 33--76.

\bibitem{CC} C. H. Chang and C. C. Chen, {\em Traveling wave solution of a
free boundary problem for a two-species competitive model}, Commun. Pure Appl. Anal.,
{\bf 12} (2013), 1065--1074.

\bibitem{CLZG} X. F. Chen, B. D. Lou, M. L. Zhou and T. Giletti, {\em Long Time Behavior of Solutions of a reaction-diffusion equation on unbounded intervals with Robin boundary conditions}, ({\it to appear}).

\bibitem{CDHMN} E. C. M. Crooks, E. N. Dancer, D. Hilhorst, M. Mimura and H. Ninomiya,
{\em Spatial segregation limit of a competition-diffusion system with
Dirichlet boundary conditions},  Nonl. Anal. RWA, {\bf 5} (2004),  645--665.


\bibitem{DHMP} E. N. Dancer, D. Hilhorst, M. Mimura and L. A. Peletier,
{\em Spatial segregation limit of a competition-diffusion system},
Euro. J. Appl. Math., {\bf 10} (1999), 97--115.


\bibitem{DuLin}Y. H. Du and Z. G. Lin,
{\em Spreading-vanishing dichotomy in the diffusive logistic model
with a free boundary}, SIAM J. Math. Anal., {\bf 42} (2010), 377-405.

\bibitem{DuLou} Y. H. Du and B. D. Lou,
{\em Spreading and vanishing in nonlinear diffusion problems with free boundaries},
J. Eur. Math. Soc., ({\it to appear}), (http://arxiv.org/pdf/1301.5373.pdf).

\bibitem{DuMatZhou} Y. H. Du, H. Matsuzawa and M. L. Zhou,
{\em Sharp estimate of the spreading speed determined by nonlinear free boundary problems}, preprint.

\bibitem{DuMatsuzawaZhou} Y. H. Du, H. Matsuzawa and M. L. Zhou,
{\em Spreading speed determined by nonlinear free boundary
problems in high dimensions}, preprint.

\bibitem{FM} P. C. Fife and J. B.  McLeod, {\em The approach of
solutions of nonlinear diffusion equations to travelling front
solutions}, { Arch. Ration. Mech. Anal.}, { 65} (1977), 335--361.

\bibitem{HIMN}D. Hilhorst, M. Iida, M. Mimura and H. Ninomiya,
{\em A competition-diffusion system approximation to the classical two-phase Stefan problem},
Japan J. Indust. Appl. Math., {\bf 18} (2001), 161--180.

\bibitem{LSU}O. A. Ladyzenskaya, V. A. Solonnikov and N. N. Ural'ceva, {\em Linear and Quasilinear Equations of Parabolic Type}, Academic Press, New York, London, 1968.

\bibitem{L} G. M. Lieberman, {\em Second Order Parabolic Differential Equations}, World Scientific, Singapore, 1996.

\bibitem{MYY85} M. Mimura, Y. Yamada and S. Yotsutani, {\em A free boundary problem in ecology},
Japan J. Appl. Math., {\bf 2} (1985), 151--186.

\bibitem{MYY86} M. Mimura, Y. Yamada and S. Yotsutani, {\em Stability analysis for free boundary problems
in ecology}, Hiroshima Math. J., {\bf 16} (1986), 477--498.

\bibitem{MYY87} M. Mimura, Y. Yamada and S. Yotsutani, {\em Free boundary problems
for some reaction-diffusion equations}, Hiroshima Math. J., {\bf 17} (1987), 241--280.

\bibitem{MN} H. Murakawa and H. Ninomiya,
{\em Fast reaction limit of a three-component reaction.diffusion system},
J. Math. Anal. Appl., {\bf 379} (2011), 150--170.

\bibitem{YL}  J. Yang, B. D. Lou, {\em Traveling wave solutions of competitive models with free boundaries}, preprint.

\end{thebibliography}
\end{document}